\newtheorem{definition}{Definition}
\newtheorem{example}{Example}
\newtheorem{theorem}{Theorem}
\newtheorem{proposition}{Proposition}
\newtheorem{conjecture}{Conjecture}
\newtheorem{remark}{Remark}
\begin{document}

\title{On representations of dialgebras and conformal algebras}

\author{Pavel Kolesnikov}

\address{Sobolev Institute of Mathematics \\
Novosibirsk, Russia}
\email{pavelsk@math.nsc.ru}

\begin{abstract}
In this note, we observe a relation between dialgebras
(in particular, Leibniz algebras) and
conformal algebras. The purpose is to show how
the methods of conformal algebras help solving problems on
dialgebras, and, conversely, how the ideas of dialgebras work
for conformal algebras.
\end{abstract}

\keywords{Leibniz algebra; Dialgebra; Vertex operator algebra; Conformal algebra.}

\maketitle

\section{Conformal Algebras}
The notion of a conformal algebra was introduced
in \cite{Kac1998} (in \cite{Primc1999}, a similar notion
appeared under the name of a vertex Lie algebra).
This notion is an important tool for studying vertex operator
algebras. The latter came into algebra from mathematical physics
(namely, from the 2-dimensional conformal field theory, what
explains the name ``conformal algebra"), that was initiated by
\cite{BPZ1984}. The algebraic essence of vertex operator structures
was extracted in \cite{Bor1986} and later developed in a series of
works, e.g., \cite{FLM1988, DongLep1993, Li1996}.
The relations between vertex and conformal algebras are very much
similar to the relations between ordinary associative and Lie algebras.

In conformal field theory, the
operator product expansion (OPE) describes the
commutator of two fields.
Let $V$ be a (complex) space of states, and let $Y:V\to \mathrm{End}\,V[[z,z^{-1}]]$,
$Y:b\mapsto Y(b,z)$,
be a state-field correspondence of a vertex algebra.
Then the
commutator of two fields can be expressed as a finite distribution
\[
[Y(a,w),Y(b,z)] = \sum\limits_{n\ge 0} \dfrac{1}{n!} Y(c_n, z)
 \dfrac{\partial^n\delta(w-z)}{\partial z^n},\quad a,b\in V,
\]
where $c_n\in V$,
$\delta(w-z)=\sum\limits_{m\in \mathbb Z} w^m z^{-m-1}$
is the formal delta-function.
The formal Fourier transformation
\begin{equation}\label{eq: ResLambdaProd}
[Y(a,z)_\lambda Y(b,z)] =
  \mathrm{Res}_{w=0} \exp\{\lambda (w-z)\}[Y(a,w),Y(b,z)]
\end{equation}
is called the $\lambda $-bracket on the space
of fields $\{Y(a,z)\mid a\in V\}$.
Here $\lambda $ is a new formal variable, and
 $\mathrm{Res}_{w=0}F(w,z) $ means the residue at
$w=0$, i.e., the formal series in $z$ that is
a coefficient of $F(w,z)$ at~$w^{-1}$.

The algebraic properties of the $\lambda $-bracket \eqref{eq: ResLambdaProd}
lead to the formal definition of a conformal algebra
over a field $\Bbbk $ of characteristic~0.

\begin{definition}[{\cite{Kac1998}}]   \label{defn:Conformal Algebra}
A conformal algebra is a left (unital) module $C$ over the
polynomial algebra $H=\Bbbk [T]$ endowed with a binary
$\Bbbk$-linear operation
\begin{equation}\label{eq:LambdaProduct}
  (\cdot{}_\lambda \cdot): C\otimes C \to C[\lambda ]  ,
\end{equation}
such that
$(Ta_\lambda b) = -\lambda (a_\lambda b)$,
$(a_\lambda D b) = (T+\lambda )(a_\lambda b)$.
\end{definition}

In terms of fields, $T$ is just the ordinary derivation with respect to~$z$.

Every conformal algebra can be represented by formal distributions
over an ordinary algebra. Let $C$ be an object described by
Definition \ref{defn:Conformal Algebra}.
Consider the space of Laurent polynomials $\Bbbk[t,t^{-1}]$ as a
right $H$-module with respect to the following action:
$f(t)T = -f'(t)$.
Then
\[
\mathcal A(C) = \Bbbk[t,t^{-1}]\otimes _H C
\]
carries the natural algebra structure:
\[
  (f\otimes _H a)\cdot (g\otimes_H b) = (g\otimes_H 1)(f\otimes _H (a_{-T} b)),
  \quad a,b\in C,\ f,g\in \Bbbk[t,t^{-1}].
\]
The space of formal distributions $\mathcal A(C)[[z,z^{-1}]]$
that consists of all series
\[
Y(a,z)=\sum\limits_{n\in \mathbb Z} (t^n\otimes_H a)z^{-n-1},
\quad
a\in C,
\]
can be endowed with the action of $T=d/dz$ and with a $\lambda $-bracket
$(\cdot {}_\lambda \cdot)$
similar to \eqref{eq: ResLambdaProd}, where the commutator
is replaced with the ordinary product of distributions.
Then
\[
  (Y(a,z)_\lambda Y(b,z)) =Y((a_\lambda b),z), \quad a,b\in C,
\]
i.e., $C$ is isomorphic to a formal distribution conformal algebra
over $\mathcal A(C)$.

The algebra $\mathcal A(C)$ is called the coefficient algebra \cite{Roitman1999}
of $C$, or annihilation algebra \cite{Kac1998}.

\begin{definition}[{\cite{Roitman1999}}]\label{defn:VarConformal}
Let $\mathcal V$ be a variety of algebras (associative, alternative, Lie, etc.).
Then a conformal algebra $C$ is said to be $\mathcal V$-conformal algebra
if $\mathcal A(C)$ belongs to~$\mathcal V$.
\end{definition}

Associative and Lie conformal algebras, their representations, and cohomologies
have been studied in a series of papers, e.g.,
\cite{BKV1999, DK1998, CK1997, Roitman2000,
BKL2003, Retakh2001, KacRetakh2008}.
In particular, associative conformal algebras naturally appear
in the study of representations of Lie conformal algebras.

\begin{example}\label{exm:CurrentAlgebra}
Consider one of the simplest (though important) examples of conformal algebras.
Suppose $A$ is an ordinary algebra
(not necessarily associative or Lie). Then the free $H$-module
\[
  \mathrm{Cur}\,A = H\otimes A
\]
generated by the space $A$ endowed
with the $\lambda $-bracket
$(f(T)\otimes a)_\lambda (g(T)\otimes b) =
  f(-\lambda )g(T+\lambda )\otimes ab$,
is called the current conformal algebra.
\end{example}

If $A$ belongs to a variety $\mathcal V$ defined by a family of
polylinear identities then $\mathrm{Cur}\, A$ is a $\mathcal V$-conformal
algebra.

Certainly, current conformal algebras and their subalgebras do not exhaust
the entire class of conformal algebras. For example, $W=\Bbbk[T,x]$
with respect to the operation
\[
(f(T,x)_\lambda g(T,x)) = f(-\lambda , T)g(T+\lambda , x+\lambda)
\]
is an associative conformal algebra (called Weyl conformal algebra \cite{Roitman1999}),
and $\mathrm{Vir}=\Bbbk[T]$ with respect to
\[
 ( f(T)_\lambda g(T) ) = f(-\lambda )g(T+\lambda )(T+2\lambda )
\]
is a Lie conformal algebra (called Virasoro conformal algebra \cite{Kac1998}).

Conformal algebra is said to be finite if it is a finitely generated $H$-module.

\section{Dialgebras}
The following notion appears naturally from a
certain noncommutative analogue of Lie homology theory.

\begin{definition}[{\cite{Loday1993}}]\label{defn:LeibnizAlgebra}
A (left) Leibniz algebra is a linear space $L$ with a bilinear operation
$[\cdot ,\cdot ]$ such that
\[
  [x,[y,z]]= [[x,y],z] + [y,[x,z]], \quad x,y,z\in L.
\]
\end{definition}

The defining identity means that the operator of
left multiplication $[x,\cdot]$ is a derivation of $L$.
Leibniz algebras are the most popular noncommutative generalizations
of Lie algebras. The following
structures play the role of associative enveloping algebras for
Leibniz algebras.

\begin{definition}[{\cite{Loday1995}}]\label{defn:AssocDialgebras}
An associative dialgebra (or {\em diassociative algebra}) is a linear space $D$ endowed
with two bilinear operations $(\cdot\dashv\cdot)$, $(\cdot\vdash\cdot)$
such that
\begin{gather}
x \dashv (y\vdash z) = x \dashv (y \dashv z), \quad
(x \dashv y)\vdash z = (x \vdash y) \vdash z,
                                                    \label{eq:0-ident}\\
x \vdash (y\vdash z) = (x \vdash y) \vdash z, \label{eq:DiAss1}\\
x \dashv (y\dashv z) = (x \dashv y) \dashv z,\label{eq:DiAss2}\\
x \vdash (y\dashv z) = (x \vdash y) \dashv z, \label{eq:DiAss3}
\end{gather}
for all $x,y,z\in D$.
\end{definition}

In particular, the operation
$[a,b]=a\vdash b - b\dashv a$, $a,b\in D$,
turns a diassociative algebra $D$ into a Leibniz algebra
denoted by $D^{(-)}$.

A systematical study of diassociative algebras was performed in
\cite{Loday2001}.
Also, in \cite{LiuDong2005} and \cite{Chap2001}
the notions of alternative and commutative dialgebras were introduced.
These definitions also appear in the general categorical approach
using the language of operads \cite{Kol2008a}.

Shortly speaking, an operad $A$
 is a collection of spaces $A(n)$, $n\ge 1$, such that a
 composition rule $A(n)\otimes A(m_1)\otimes \dots \otimes A(m_n)
 \to A(m_1+\dots + m_n)$
and an action of a symmetric group are defined in such a way
that some natural axioms hold (associativity of a composition,
existence of a unit in $A(1)$, and equivariance of the composition
with respect to the symmetric group action).

A linear space $A$ over a field $\Bbbk $ may be considered
as an operad (see, e.g., \cite{Leinster2004} as a general reference),
where $A(n)=\mathrm{Hom}\,(A^{\otimes n}, A)$.
In the free operad denoted by $\mathrm{Alg}$,
the spaces $\mathrm{Alg}(n)$ are spanned by (planar)
binary trees with $n$ leaves.

An algebra structure on a linear space
$A$ is just a functor of operads $\mathrm{Alg} \to A$.
If $\mathcal V$ is a variety of algebras defined by
polylinear identities then there exists a free $\mathcal V$-operad
$\mathcal V\hbox{-}\mathrm{Alg}$
built on polylinear polynomials of the free $\mathcal V$-algebra.
There exists a canonical functor $\mathrm{Alg}\to \mathcal V\hbox{-}\mathrm{Alg}$,
and it is clear that an algebra $A$ belongs to $\mathcal V$
if and only if there exists a functor
$\mathcal V\hbox{-}\mathrm{Alg} \to A$ such that the following diagram is commutative:
\[
\begin{array}{c@{} c@{} c@{} c@{} c}
\mathrm{Alg} &          &\longrightarrow&         &A\\
             & \searrow && \nearrow \\
             &          &\mathcal V\hbox{-}\mathrm{Alg}
\end{array}
\]


A similar definition works for dialgebras. An operad
$\mathrm{Dialg}$ whose spaces are spanned by planar
binary trees with 2-colored vertices (colors 1~and~2 stand for $\vdash$
and $\dashv $, respectively) has an image equivalent to
the Hadamard product $\mathrm{Alg}\otimes \mathcal E$,
where $\mathcal E $ is the free $\mathcal V_c$-operad
corresponding to the variety $\mathcal V_c$ of associative and commutative
dialgebras (Perm-algebras), $\dim \mathcal E(n)=n$ (see \cite{Chap2001, Kol2008a} for details).

Suppose
$\mathcal V$ is a variety of algebras  defined by
polylinear  identities.
For a linear space $D$,  a functor $\mathrm{Dialg} \to D$
defines two bilinear operations $\vdash$ and~$\dashv$ on $D$. Conversely, any system
$(D,\vdash, \dashv)$ may be considered as a functor $\mathrm{Dialg} \to D$.

\begin{definition}[{\cite{Kol2008a}}]\label{defn:Var-Dialg}
A linear space $D$ with two bilinear operations $\vdash$ and~$\dashv$
is said to be di-$\mathcal V$-algebra if there exists a functor
$\mathcal{V}\hbox{-}\mathrm{Alg}\otimes \mathcal E\to D$
such that the  following diagram
is commutative:
\[
\begin{CD}
\mathrm{Dialg} @>>> D \\
@VVV @VVV \\
\mathrm{Alg}\otimes \mathcal E @>>> \mathcal V\hbox{-}\mathrm{Alg}\otimes \mathcal E
\end{CD}
\]
\end{definition}

We will mainly use the term ``di-$\mathcal V$-algebra'', but this is the same as 
``$\mathcal V$-dialgebra''.

The last definition is easy to translate into the language of
identities. First, identify
$\mathrm{Alg}(n)$
with the space of polylinear non-associative polynomials in
$x_1,\dots, x_n$; for  $\mathrm{Dialg}$ we have a similar interpretation.
Next, consider the following linear maps
$\Psi_k: \mathrm{Alg}(n)\to \mathrm{Dialg}(n)$, $k=1,\dots, n$:
\[
  \Psi_k: (x_{j_1}\dots x_k \dots x_{j_n}) \mapsto (x_{j_1}\vdash \dots \vdash
  x_k
   \dashv \dots \dashv x_{j_n}),
\]
assuming the bracketing $(\dots )$ on monomials is preserved.
Then we have

\begin{theorem}[{\cite{Kol2008a}}]\label{thm:Dias-Identity}
Assume $\{f_i\mid i\in I\}$ is the family of polylinear defining
identities of a variety $\mathcal V$.
 Then $D$ is a di-$\mathcal V$-algebra if and
 only if $D$ satisfies the identities $\Psi_k(f_i)=0$
 for all $i\in I$, $k=1, \dots, \deg f_i$.
\end{theorem}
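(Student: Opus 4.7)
The plan is to leverage the decomposition $\mathrm{Dialg}\cong\mathrm{Alg}\otimes\mathcal E$ recalled just before the theorem. First I would fix a basis $\{e_1^{(n)},\dots,e_n^{(n)}\}$ of $\mathcal E(n)$ — available since $\dim\mathcal E(n)=n$ — indexed by the position $k$ of the ``marked'' variable in the corresponding Perm-monomial. A short check then shows that, under the induced identification $\mathrm{Dialg}(n)\cong\bigoplus_{k=1}^{n}\mathrm{Alg}(n)\otimes e_k^{(n)}$, the map $\Psi_k$ becomes simply $w\mapsto w\otimes e_k^{(n)}$. In particular $\mathrm{Dialg}(n)=\bigoplus_{k}\Psi_k(\mathrm{Alg}(n))$ as vector spaces, and the definition of $\Psi_k$ in the text matches this tensor decomposition on the nose.

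Next I would recast Definition~\ref{defn:Var-Dialg} in ideal-theoretic form. Writing $\mathcal V\text{-}\mathrm{Alg}=\mathrm{Alg}/J$ for the operadic ideal $J$ generated by $\{f_i\mid i\in I\}$, and using arity-by-arity exactness of the Hadamard product with $\mathcal E$, one obtains $\mathcal V\text{-}\mathrm{Alg}\otimes\mathcal E=(\mathrm{Alg}\otimes\mathcal E)/(J\otimes\mathcal E)$, with $(J\otimes\mathcal E)(n)=\bigoplus_{k}\Psi_k(J(n))$. Consequently the diagram of Definition~\ref{defn:Var-Dialg} commutes precisely when the operad morphism $\mathrm{Dialg}\to D$ determined by $(\vdash,\dashv)$ kills every element of $\bigoplus_{k}\Psi_k(J)$, i.e.\ when $\Psi_k(g)=0$ is an identity on $D$ for every $g\in J$ and every admissible~$k$.

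The final step is the passage from arbitrary $g\in J$ to the generating set $\{f_i\}$. For this I would check that the operadic ideal of $\mathrm{Alg}\otimes\mathcal E$ generated by $\{f_i\otimes e_k^{(n_i)}\mid k=1,\dots,n_i\}$ coincides with $J\otimes\mathcal E$. The key observation is that operadic composition in a Hadamard product is diagonal, so a typical generator $(w\circ_s f_i)\otimes e_\ell$ of $J\otimes\mathcal E$ can be rewritten as a Hadamard composition $(w\otimes a)\circ_s(f_i\otimes b)$, where the factorisation $e_\ell=a\circ_s b$ in $\mathcal E$ is read off from whether the marked position $\ell$ lies inside or outside the $f_i$-slot. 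Summing such compositions recovers all of $J\otimes\mathcal E$ from the Hadamard generators $f_i\otimes e_k^{(n_i)}$, which under the identification of the first paragraph are exactly the $\Psi_k(f_i)$.

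The main obstacle I expect is precisely this last bookkeeping: matching marked-position data on both sides of a composition, and verifying that every mark position $\ell$ in the composite does correspond to a genuine factorisation $e_\ell=a\circ_s b$ of basis vectors in $\mathcal E$ (together with the usual Perm-algebra relations collapsing the ``unmarked'' sides). Everything else is a formal translation through the Hadamard decomposition of $\mathrm{Dialg}$; once the combinatorics of the mark under insertions is in place, the ``if and only if'' follows directly from the two preceding paragraphs.
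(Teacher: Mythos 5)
The paper does not prove this statement --- it is quoted from \cite{Kol2008a} --- so your argument has to stand on its own, and it has a genuine gap at its very first step. You assert that $\mathrm{Dialg}(n)\cong\bigoplus_{k=1}^{n}\mathrm{Alg}(n)\otimes e_k^{(n)}$ and hence $\mathrm{Dialg}(n)=\bigoplus_k\Psi_k(\mathrm{Alg}(n))$. That is false for $n\ge 3$: $\mathrm{Dialg}(n)$ is spanned by \emph{all} $2$-colored planar binary trees, so $\dim\mathrm{Dialg}(3)=2\cdot 6\cdot 2^2=48$, while $\dim(\mathrm{Alg}\otimes\mathcal E)(3)=2\cdot 6\cdot 3=36$. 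The paper is careful to say that $\mathrm{Dialg}$ has an \emph{image} equivalent to $\mathrm{Alg}\otimes\mathcal E$; the kernel of $\mathrm{Dialg}\to\mathrm{Alg}\otimes\mathcal E$ is generated exactly by the $0$-identities \eqref{eq:0-ident}, whose monomials (e.g.\ $x\dashv(y\vdash z)$) are not in the image of any $\Psi_k$. The maps $\Psi_k$ give a section of the quotient, not a decomposition of $\mathrm{Dialg}(n)$ itself.

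This is not a cosmetic slip: it is precisely where your ``if'' direction breaks. Definition~\ref{defn:Var-Dialg} requires the structure map $\mathrm{Dialg}\to D$ to factor through $\mathrm{Alg}\otimes\mathcal E$, so a di-$\mathcal V$-algebra must kill the $0$-identities; but the identities $\Psi_k(f_i)=0$ do not in general imply them. For $\mathcal V$ the associative variety, $\Psi_1,\Psi_2,\Psi_3$ applied to $(x_1x_2)x_3-x_1(x_2x_3)$ yield exactly \eqref{eq:DiAss1}--\eqref{eq:DiAss3} and nothing forces \eqref{eq:0-ident}, which Loday lists as independent axioms. So a structure satisfying all $\Psi_k(f_i)=0$ need not factor through $\mathrm{Alg}\otimes\mathcal E$ at all, and your ideal-theoretic reduction never gets off the ground. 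The repair is to add the $0$-identities to the hypotheses of the ``if'' direction (as the source \cite{Kol2008a} and the companion Theorem~\ref{thm:EilengerghDefn} do); once $D$ is assumed to satisfy them, its structure map does factor through $\mathrm{Alg}\otimes\mathcal E$, and your remaining steps --- the arity-wise exactness giving $\mathcal V\hbox{-}\mathrm{Alg}\otimes\mathcal E=(\mathrm{Alg}\otimes\mathcal E)/(J\otimes\mathcal E)$, and the check that $\{f_i\otimes e_k^{(n_i)}\}$ generates $J\otimes\mathcal E$ using the factorisations $e_\ell=a\circ_s b$ in the Perm operad --- form a workable plan.
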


If $\mathcal V$ is the variety of Lie algebras then
$f = x_1x_2+x_2x_1$ is one of its defining identities.
Since $\Psi_1(f)=x_1\dashv x_2 + x_2\vdash x_1$,
we can describe Lie dialgebras in terms of single operation, say,
$[a,b]=a\vdash b$. Then the class of Lie dialgebras coincides
with the class of Leibniz algebras.

Note that all di-$\mathcal V$-algebras satisfy the  relations
\eqref{eq:0-ident}, called 0-identities \cite{Kol2008a}.
The following approach to the definition of varieties
of dialgebras was proposed in~\cite{Pozh2009}.

Let $D$ be a dialgebra that satisfies 0-identities. Then
the linear span $D_0$ of all elements
$a\vdash b - a\dashv b$, $a,b\in D$, is an ideal of~$D$.
The quotient $\bar D=D/D_0$ is an ordinary algebra with a
single operation. Moreover, the following actions are well-defined:
\[
  \begin{aligned}
   & \bar D\otimes D \to D, \\
   & (a+D_0)\otimes b \mapsto a\vdash b,
  \end{aligned}
  \qquad  \qquad
  \begin{aligned}
   & D\otimes \bar D \to D, \\
   & a\otimes (b+D_0) \mapsto a\dashv b.
  \end{aligned}
\]
Denote by $\hat D$ the split null extension $\bar D\oplus D$,
assuming $D^2=0$.


\begin{theorem}[{\cite{Pozh2009}}]\label{thm:EilengerghDefn}
Suppose $\mathcal V$ is a variety of algebras with
polylinear defining identities. Then $D$ is a
di-$\mathcal V$-algebra if and only if $D$ satisfies
the 0-identities and $\hat D$ is an algebra from~$\mathcal V$.
\end{theorem}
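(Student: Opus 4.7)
The plan is to use Theorem~\ref{thm:Dias-Identity} as a bridge: it rewrites the di-$\mathcal V$ condition as the family of polylinear identities $\Psi_k(f_i)=0$ on $D$, which I would then compare directly to the polylinear identities $f_i=0$ on $\hat D$. The concrete product on $\hat D$ is
\[
 (\bar a, x)\cdot (\bar b, y) = (\bar a\,\bar b,\; \bar a\cdot y + x\cdot \bar b),
\]
where the two actions of $\bar D$ on $D$ are $\bar a\cdot y = a\vdash y$ and $x\cdot \bar b = x\dashv b$ on any lifts $a,b\in D$ of $\bar a,\bar b$ (well defined precisely because $D$ satisfies the 0-identities), and there is no $D\cdot D$ contribution because $D^2=0$.

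First I would expand $f_i(\hat v_1,\dots,\hat v_n)$ for $\hat v_j=(\bar a_j,x_j)\in\hat D$ by polylinearity into $2^n$ summands indexed by subsets $S\subseteq\{1,\dots,n\}$, where position $j\in S$ contributes the $D$-component of $\hat v_j$ and otherwise the $\bar D$-component. Since $D^2=0$, every $S$ with $|S|\ge 2$ gives zero. The $S=\emptyset$ summand lives in $\bar D$ and equals $f_i$ evaluated in $\bar D$; each $S=\{k\}$ summand lives in $D$. A short induction on the binary tree of each monomial of $f_i$ identifies the $S=\{k\}$ summand with $\Psi_k(f_i)(a_1,\dots,a_{k-1},x_k,a_{k+1},\dots,a_n)$ evaluated in $D$: at every internal node on the path from the root to the leaf carrying $x_k$ the choice of $\vdash$ or $\dashv$ forced by the action rules is exactly the one prescribed by $\Psi_k$, while at nodes off the path both choices give the same value in $\bar D$.

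With this dictionary both directions are immediate. For the ``if'' direction, specialize $\hat v_k=(0,x_k)$ and $\hat v_j=(\bar a_j,0)$ for $j\ne k$: the only surviving summand is the one for $S=\{k\}$, and the identity $f_i=0$ in $\hat D$ forces $\Psi_k(f_i)(a_1,\dots,x_k,\dots,a_n)=0$ in $D$; varying $i$, $k$ and the arguments, Theorem~\ref{thm:Dias-Identity} yields that $D$ is a di-$\mathcal V$-algebra. For the ``only if'' direction, the $n$ one-position summands vanish because $\Psi_k(f_i)=0$ holds in $D$, so one only has to show that the $\bar D$-term $f_i(\bar a_1,\dots,\bar a_n)$ is zero, i.e.\ that $\bar D\in \mathcal V$. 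Here I would argue that the 0-identities together with the description $D_0=\mathrm{span}\{a\vdash b - a\dashv b\}$ make $D_0$ a two-sided ideal with respect to both $\vdash$ and $\dashv$; congruence modulo $D_0$ is then preserved by each operation, so any two assignments of $\vdash/\dashv$ to the internal nodes of $f_i$ give results congruent modulo $D_0$. In particular $\Psi_1(f_i)\equiv f_i^{\vdash}\pmod{D_0}$, so $f_i^{\vdash}(a_1,\dots,a_n)\in D_0$, which is exactly $f_i(\bar a_1,\dots,\bar a_n)=0$ in $\bar D$.

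The main obstacle is the tree-induction step that matches the $S=\{k\}$ summand with $\Psi_k(f_i)$; once this bookkeeping is clean, the remaining work is multilinear algebra plus a careful use of the 0-identities to pass between $\vdash$ and $\dashv$ modulo $D_0$, and then an invocation of Theorem~\ref{thm:Dias-Identity}.
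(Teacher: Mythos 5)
The paper does not actually prove this theorem; it is quoted from \cite{Pozh2009} without argument, so there is no in-paper proof to compare against. On its own terms your proposal is correct and complete (modulo Theorem~\ref{thm:Dias-Identity}, which the paper also only cites, so you are reducing one quoted result to another rather than arguing from the operadic definition). The key dictionary is right: with $\hat v_j=(\bar a_j,x_j)$, multilinearity and $D^2=0$ kill all terms with two or more $D$-slots, the empty-set term is $f_i$ in $\bar D$, and the $S=\{k\}$ term is exactly $\Psi_k(f_i)(a_1,\dots,x_k,\dots,a_n)$ because a path node with the $k$-th leaf in its right (resp.\ left) subtree forces the action $a\vdash y$ (resp.\ $x\dashv b$), which is precisely the $\vdash$/$\dashv$ assignment of $\Psi_k$, while off-path subtrees only matter through their images in $\bar D$ (well-defined by the 0-identities). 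The two remaining ingredients you invoke --- that $D_0$ is a two-sided ideal for both operations and that $\vdash\equiv\dashv \pmod{D_0}$, hence $\Psi_1(f_i)\equiv f_i^{\vdash}$ and $\bar D\in\mathcal V$ --- do follow from the 0-identities exactly as you say, and they are consistent with the paper's setup preceding the theorem. The only caveat worth flagging is that Theorem~\ref{thm:Dias-Identity} as stated lists only the identities $\Psi_k(f_i)=0$; one should read it as holding in the presence of the 0-identities (which the operadic Definition~\ref{defn:Var-Dialg} builds in via $\mathcal E$), but since the 0-identities are hypotheses in the direction where you use it, this does not affect your argument.
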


A curious relation between conformal algebras and dialgebras
was noted in~\cite{Kol2008a}. It turns out that
if $C$ is a $\mathcal V$-conformal algebra in the sense
of Definition \ref{defn:VarConformal} then
the same linear space endowed with just two operations
\[
 a\vdash b = (a_\lambda b)|_{\lambda =0}, \quad
 a\dashv b = (a_\lambda b)|_{\lambda =-T}, \quad a,b\in C,
\]
is a di-$\mathcal V$-algebra denoted by $C^{(0)}$.
Conversely, every di-$\mathcal V$-algebra can be embedded into
an appropriate $\mathcal V$-conformal algebra.
The last statement easily follows from Theorem \ref{thm:EilengerghDefn}
and

\begin{theorem}[{c.f. \cite{KolGub2009}}]\label{thm:CurrEmbedding}
Let $D$ be a dialgebra satisfying the 0-identities.
Then the map $D\to H\otimes \hat D$, $a\mapsto 1\otimes (a+D_0) + T\otimes a$,
$a\in D$, is an injective homomorphism of
dialgebras $D\to (\mathrm{Cur}\,\hat D)^{(0)}$.
Therefore, $D$ is a di-$\mathcal V$-algebra if and only if there exists
a $\mathcal V$-algebra $A$ such that $D\subseteq (\mathrm{Cur}\,A)^{(0)}$.
\end{theorem}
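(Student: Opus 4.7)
The plan is to split the theorem into two parts: first verify that the stated map is a dialgebra homomorphism by computing the relevant $\lambda$-products in $\mathrm{Cur}\,\hat D$ explicitly, and then deduce the ``if and only if'' characterization by combining the embedding with Theorem~\ref{thm:EilengerghDefn} and the general facts mentioned after Example~\ref{exm:CurrentAlgebra}.

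For the homomorphism part, write $\phi(a)=1\otimes \bar a + T\otimes a$ with $\bar a = a+D_0\in\bar D$, and expand the $\lambda$-product
$(\phi(a)_\lambda \phi(b))$ in $\mathrm{Cur}\,\hat D$ using bilinearity and the defining formula $(f(T)\otimes x)_\lambda (g(T)\otimes y)=f(-\lambda)g(T+\lambda)\otimes xy$. Three of the four terms survive because $D^2=0$ in $\hat D$, killing the coefficient $(-\lambda)(T+\lambda)\otimes ab$. Using that the product on $\hat D$ encodes $\bar a\cdot b = a\vdash b$, $a\cdot\bar b = a\dashv b$, and $\bar a\bar b=\overline{a\vdash b}=\overline{a\dashv b}$, I get
\[
(\phi(a)_\lambda\phi(b)) = 1\otimes \overline{a\vdash b} + (T+\lambda)\otimes(a\vdash b) - \lambda\otimes(a\dashv b).
\]
Specializing $\lambda=0$ gives $1\otimes\overline{a\vdash b} + T\otimes(a\vdash b)=\phi(a\vdash b)$, and specializing $\lambda=-T$ gives $1\otimes\overline{a\dashv b} + T\otimes(a\dashv b)=\phi(a\dashv b)$; so $\phi$ respects both $\vdash$ and $\dashv$. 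Injectivity is immediate: if $1\otimes\bar a + T\otimes a = 0$ in $H\otimes\hat D$, then by $H$-freeness the coefficients of $1$ and $T$ vanish separately, forcing $a=0$.

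For the second assertion, the forward direction uses Theorem~\ref{thm:EilengerghDefn}: a di-$\mathcal V$-algebra $D$ satisfies the 0-identities and has $\hat D\in\mathcal V$, so taking $A=\hat D$ gives an embedding $D\hookrightarrow (\mathrm{Cur}\,\hat D)^{(0)}$ via $\phi$. Conversely, if $A\in\mathcal V$, then $\mathrm{Cur}\,A$ is a $\mathcal V$-conformal algebra (as noted after Example~\ref{exm:CurrentAlgebra}), hence $(\mathrm{Cur}\,A)^{(0)}$ is a di-$\mathcal V$-algebra by the construction $C\mapsto C^{(0)}$ described before the theorem. Since di-$\mathcal V$-algebras are defined by polylinear identities (Theorem~\ref{thm:Dias-Identity}), any subalgebra of $(\mathrm{Cur}\,A)^{(0)}$, including $D$, is again a di-$\mathcal V$-algebra.

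The main technical point---and the only nontrivial step---is the $\lambda$-product calculation. The algebraic content is that the specific ansatz $1\otimes\bar a + T\otimes a$ is precisely what is needed for the \emph{degree-one-in-$T$} part of $(\phi(a)_\lambda\phi(b))$ to carry $a\vdash b$ at $\lambda=0$ and $a\dashv b$ at $\lambda=-T$; the vanishing of $D^2$ in $\hat D$ is essential, since otherwise a spurious $T^2$-term would obstruct the image from lying in the span of $\{1,T\}\otimes\hat D$.
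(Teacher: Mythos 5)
Your proof is correct and follows the approach the paper intends (the paper itself defers the verification to the cited reference and derives the ``if and only if'' exactly as you do, from Theorem~\ref{thm:EilengerghDefn} together with the fact that $C\mapsto C^{(0)}$ sends $\mathcal V$-conformal algebras to di-$\mathcal V$-algebras and that the latter class is closed under subalgebras by Theorem~\ref{thm:Dias-Identity}). The $\lambda$-product computation is right, including the essential use of $D^2=0$ to kill the $(-\lambda)(T+\lambda)$ term and of $\overline{a\vdash b}=\overline{a\dashv b}$ in $\bar D$ when specializing $\lambda=-T$.
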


Thus, there are three equivalent definitions of what is a dialgebra of
a given variety provided by Theorems~\ref{thm:Dias-Identity},
\ref{thm:EilengerghDefn}, and~\ref{thm:CurrEmbedding}.

\section{Some Classical Theorems for Leibniz Algebras}

Since Leibniz algebras are just Lie dialgebras in the sense of
Definition~\ref{defn:Var-Dialg}, we may use
Theorem~\ref{thm:CurrEmbedding} to get natural generalizations of
 some classical statements on Lie algebras to the class of Leibniz algebras.
These are: the Engel Theorem, the Poincar\'e---Birkhoff---Witt (PBW) Theorem,
and the Ado Theorem.

We will need the following statement (c.f.~Theorem~3 in~\cite{Kol2008b}).

\begin{theorem}\label{thm:ConfRepresentation}
Let $L$ be a Leibniz algebra, and let $V$ be a module over the
Lie algebra $\bar L$. Then there exists an injective homomorphism
$\rho: L\to (\mathrm{Cur}\,\mathrm{gl}(V\oplus (L\otimes V)))^{(0)}$
of Leibniz algebras.
\end{theorem}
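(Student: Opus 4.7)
The plan is to reduce to Theorem~\ref{thm:CurrEmbedding}. That theorem embeds $L$ into $(\mathrm{Cur}\,\hat L)^{(0)}$ via $a\mapsto 1\otimes\bar a+T\otimes a$, where $\hat L=\bar L\oplus L$ is the split null extension, i.e., the Lie algebra in which $L$ is an abelian ideal and $\bar L$ acts on $L$ by $\bar a\cdot b=[a,b]$ (well defined because the Leibniz identity forces $L_0\cdot L=0$). So it suffices to produce a Lie algebra homomorphism $\pi\colon\hat L\to\mathrm{gl}(W)$, with $W=V\oplus(L\otimes V)$, whose restriction to the $L$-summand is injective; the functoriality of $\mathrm{Cur}$ then yields a dialgebra homomorphism $(\mathrm{Cur}\,\hat L)^{(0)}\to(\mathrm{Cur}\,\mathrm{gl}(W))^{(0)}$, and composition with the embedding of $L$ provides the desired $\rho$.

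I would define $\pi$ summand by summand. For $\bar a\in\bar L$, let $\pi(\bar a)$ act on $V$ by the given $\bar L$-module structure and on $L\otimes V$ by the tensor-product rule
\[
\pi(\bar a)(b\otimes v)=[a,b]\otimes v+b\otimes(\bar a\cdot v);
\]
for $b\in L$, let $\pi(b)$ send $v\in V$ to $b\otimes v\in L\otimes V$ and annihilate $L\otimes V$. The key step is to check that $\pi$ respects the bracket of $\hat L$. The relations $[\pi(b_1),\pi(b_2)]=0$ (both compositions vanish, since $\pi(L)$ lands in $L\otimes V$ and then dies) and $[\pi(\bar a),\pi(b)]=\pi([a,b])$ (nonzero only on $V$, where both sides send $v$ to $[a,b]\otimes v$) are immediate; the only substantive computation is $[\pi(\bar a_1),\pi(\bar a_2)]$ on $L\otimes V$, which expands to
\[
\bigl([a_1,[a_2,b]]-[a_2,[a_1,b]]\bigr)\otimes v+b\otimes[\bar a_1,\bar a_2]\cdot v.
\]
By the left Leibniz identity the first summand equals $[[a_1,a_2],b]\otimes v$, and the whole expression coincides with $\pi(\overline{[a_1,a_2]})(b\otimes v)$.

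Injectivity of $\rho$ is then automatic provided $V\neq 0$: for any $0\neq a\in L$ and $0\neq v\in V$ one has $\pi(a)v=a\otimes v\neq 0$, so the $T$-coefficient of $\rho(a)=1\otimes\pi(\bar a)+T\otimes\pi(a)$ is nonzero, whence $\rho(a)\neq 0$. I do not expect any substantial obstacle beyond the Leibniz-identity manipulation above, which is precisely what forces the diagonal formula for $\pi(\bar a)$ on $L\otimes V$ to define a genuine $\bar L$-representation compatible with the embedding $v\mapsto b\otimes v$.
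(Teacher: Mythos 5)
Your proof is correct, and the operators you construct are exactly those in the paper's own argument: your $\pi(\bar a)$ and $\pi(b)$ are the paper's $\rho_0$ and $\rho_1$, and the one substantive computation --- that the left Leibniz identity turns $b\otimes v\mapsto [a,b]\otimes v+b\otimes \bar a v$ into a representation of $\bar L$ on $L\otimes V$ --- is identical in both. The difference is organizational. The paper expands the $\lambda$-bracket of $\rho(x)$ and $\rho(y)$ in $\mathrm{Cur}\,\mathrm{gl}(W)$ and sets $\lambda=0$, which requires only the two identities $[\rho_0(x),\rho_0(y)]=\rho_0([x,y])$ and $[\rho_0(x),\rho_1(y)]=\rho_1([x,y])$; you instead assemble the same data into a genuine Lie algebra homomorphism $\pi\colon\hat L\to\mathrm{gl}(W)$ and compose the embedding of Theorem~\ref{thm:CurrEmbedding} with $\mathrm{Cur}\,\pi$. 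Your route costs the additional (trivial) verification $[\pi(b_1),\pi(b_2)]=0$, which the paper's direct $\lambda=0$ computation never needs, but it buys a cleaner conceptual statement --- $V\oplus(L\otimes V)$ is an honest module over the split null extension $\hat L$ --- and it makes explicit the reduction to Theorem~\ref{thm:CurrEmbedding} that the paper only alludes to in the surrounding text. Your caveat $V\neq 0$ for injectivity is indeed needed, and is left implicit in the paper as well.
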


\begin{proof}
For every $x\in L$, denote $\bar x=x+L_0\in \bar L$ and define
$\rho(x)\in H\otimes \mathrm{gl}(V\oplus (L\otimes V))$ as follows:
\[
\rho(x)  = 1\otimes \rho_0(x) + T\otimes \rho_1(x),
\quad \rho_i(x)\in \mathrm{gl}(V\oplus (L\otimes V)),
\]
where
\[
\begin{aligned}
\rho_0(x) &: v\mapsto \bar x v, \\
\rho_0(x) &: a\otimes v \mapsto a\otimes \bar x v + [x,a]\otimes v, \\
\rho_1(x) &: v\mapsto x\otimes v, \\
\rho_1(x) &: a\otimes v \mapsto 0
\end{aligned}
\]
for all $a\in L$, $v\in V$.
It is clear that $\rho $ is injective ($\rho_1(x)\ne 0$ for $x\ne 0$).
Let us check that $\rho $ is a homomorphism of Leibniz algebras.
First,
$(\rho(x)_\lambda \rho(y))
= [1\otimes \rho_0(x)-1\otimes \lambda \rho_1(x), 1\otimes \rho_0(y)+(T+\lambda)\rho_1(y)]$.
for all $x,y\in L$.
Hence,
$[\rho(x), \rho(y)]=1\otimes [\rho_0(x), \rho_0(y)] +
 T\otimes [\rho_0(x),\rho_1(y)]$.
Next, it is straightforward to compute
\[
\begin{aligned}{}
[\rho_0(x),\rho_0(y)] &:v+(a\otimes w)\mapsto
   [\bar x,\bar y]v + a\otimes [\bar x,\bar y]w + [[x,y],a]\otimes w, \\
[\rho_0(x),\rho_1(y)] &:v+(a\otimes w)\mapsto [x,y]\otimes v
\end{aligned}
\]
for all $v,w\in V$, $a\in L$.
Therefore,
$[\rho_0(x),\rho_0(y)]=\rho_0([x,y])$,
$[\rho_0(x),\rho_1(y)]=\rho_1([x,y])$, i.e.,
$[\rho(x), \rho(y)]=\rho([x,y])$.
\end{proof}

The following statement immediately follows from
Theorem~\ref{thm:ConfRepresentation} applied to $V=L$.

\begin{theorem}[{\cite{AyOmir1998, Pats2007, Barn2010}}]\label{thm:Engel}
Let $L$ be a finite-dimensional Leibniz algebra
such that all operators $[x,\cdot]\in \mathrm{End}\, L$ are nilpotent.
Then $L$ itself is a nilpotent Leibniz algebra.
\end{theorem}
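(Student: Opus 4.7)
The plan is to apply Theorem~\ref{thm:ConfRepresentation} with $V=L$ itself, where $L$ is viewed as a $\bar L$-module via left multiplication $\bar x\cdot v=[x,v]$. I would first record that this action is well-defined by the 0-identity $[L_0,L]=0$ and is a Lie module action by the left Leibniz identity; the hypothesis then reads that each $\bar x$ acts as a nilpotent operator on $V$.

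The theorem supplies an injective Leibniz homomorphism $\rho:L\to(\mathrm{Cur}\,\mathrm{gl}(W))^{(0)}$ with $W=V\oplus(L\otimes V)$ and $\rho(x)=1\otimes\rho_0(x)+T\otimes\rho_1(x)$. Next, let $N\subseteq\mathrm{gl}(W)$ be the Lie subalgebra spanned by all $\rho_0(x)$ and $\rho_1(y)$, $x,y\in L$. Using the commutation rules $[\rho_0(x),\rho_0(y)]=\rho_0([x,y])$, $[\rho_0(x),\rho_1(y)]=\rho_1([x,y])$, and $[\rho_1(x),\rho_1(y)]=0$ (the last since $\rho_1(x)^2=0$ on $W$), the image $\rho(L)$ is contained in $(\mathrm{Cur}\,N)^{(0)}\subseteq(\mathrm{Cur}\,\mathrm{gl}(W))^{(0)}$.

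The core step is to show that $N$ is a nilpotent Lie algebra by checking that every element of $N$ acts nilpotently on $W$ and then invoking the classical Engel theorem. Here I would exploit the block decomposition $W=V\oplus(L\otimes V)$: each $\rho_1(y)$ is strictly off-diagonal (maps $V\to L\otimes V$ and kills $L\otimes V$), while $\rho_0(x)$ is block-diagonal, acting as the nilpotent $[x,\cdot]$ on $V$ and as $[x,\cdot]\otimes 1+1\otimes[x,\cdot]$ on $L\otimes V$---a sum of two commuting nilpotents, hence itself nilpotent. Thus every element $\rho_0(x)+\rho_1(y)$ is block lower-triangular with nilpotent diagonal blocks, so nilpotent. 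I expect this step to be the main subtlety: passing from nilpotency of the generators to nilpotency of every element of $N$ is where the specific triangular shape of $\rho_0$ and $\rho_1$ is essential, since a generic sum of nilpotent matrices need not be nilpotent.

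To conclude, iterating the current $\vdash$-product gives
\[
[f_1\otimes a_1,[f_2\otimes a_2,\ldots,[f_{n-1}\otimes a_{n-1},f_n\otimes a_n]\ldots]]=f_1(0)\cdots f_{n-1}(0)\,f_n(T)\otimes[a_1,[a_2,\ldots,[a_{n-1},a_n]\ldots]],
\]
which vanishes once $n$ exceeds the nilpotency class of $N$. Hence $(\mathrm{Cur}\,N)^{(0)}$ is a nilpotent Leibniz algebra, and so is its subalgebra $\rho(L)\cong L$.
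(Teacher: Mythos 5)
Your proposal is correct and follows exactly the route the paper intends: the paper's entire proof is the remark that the theorem ``immediately follows from Theorem~\ref{thm:ConfRepresentation} applied to $V=L$'', and your argument supplies precisely the missing details (the $\bar L$-module structure on $V=L$, nilpotency of the Lie algebra $N$ via the block structure of $\rho_0,\rho_1$ together with the classical Engel theorem, and nilpotency of $(\mathrm{Cur}\,N)^{(0)}$ from the formula for iterated brackets). One tiny quibble: $[\rho_1(x),\rho_1(y)]=0$ holds because the image of every $\rho_1$ lies in $L\otimes V$, which every $\rho_1$ annihilates (so in fact $\rho_1(x)\rho_1(y)=0$ for all $x,y$), not merely because each $\rho_1(x)$ squares to zero.
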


Recall that for a Lie algebra $L$ the classical PBW Theorem states
that the universal enveloping associative algebra $U(L)$ is isomorphic
(as a linear space) to the symmetric algebra $S(L)$.
For Leibniz algebras, the role of associative envelopes belongs to
diassociative algebras.

\begin{theorem}[{\cite{Loday2001, AymonGr2003}}]\label{thm:PBWLeibniz}
The universal enveloping diassociative algebra $Ud(L)$
of a Leibniz algebra $L$ is isomorphic (as a linear space) to
$U(\bar L)\otimes L$.
\end{theorem}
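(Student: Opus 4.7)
The plan is to leverage Theorem~\ref{thm:EilengerghDefn} to translate the problem into module theory. By that result, specifying a diassociative structure on $D$ is equivalent to providing a triple $(\bar D,\pi,D)$: an associative algebra $\bar D$, a $\bar D$-bimodule $D$, and a linear surjection $\pi\colon D\to \bar D$ compatible with the bimodule actions (so $\pi(\bar b\cdot a)=\bar b\,\pi(a)$ and $\pi(a\cdot \bar b)=\pi(a)\,\bar b$), the dialgebra operations being recovered via $a\vdash b=\pi(a)\cdot b$ and $a\dashv b=a\cdot\pi(b)$. Under this dictionary, a Leibniz map $\phi\colon L\to D^{(-)}$ amounts to a linear map $\phi\colon L\to D$ satisfying $\phi([x,y])=\pi(\phi(x))\cdot\phi(y)-\phi(y)\cdot\pi(\phi(x))$; the universal such triple is $Ud(L)$.

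First I would observe that $\pi\circ\phi\colon L\to \bar D$ annihilates the Leibniz kernel $L_0$ and hence factors through a Lie homomorphism $\bar L\to \bar D$, so the universal property of $U(\bar L)$ forces the universal $\bar D$ to be $U(\bar L)$. The task then reduces to exhibiting the universal $U(\bar L)$-bimodule $M$ equipped with $\phi\colon L\to M$ and compatible $\pi\colon M\to U(\bar L)$ such that $\pi\circ\phi$ is the canonical composition $L\to\bar L\hookrightarrow U(\bar L)$ and $\phi([x,y])=\bar x\cdot\phi(y)-\phi(y)\cdot\bar x$. My candidate is $M=U(\bar L)\otimes L$ with left action given by multiplication on the first factor, right action generated on $\bar L$ by $(u\otimes y)\cdot \bar x:=u\bar x\otimes y - u\otimes [x,y]$, together with $\pi(u\otimes y)=u\bar y$ and $\phi(y)=1\otimes y$.

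The hard part will be showing that this right action extends consistently from $\bar L$ to all of $U(\bar L)$; after realising $U(\bar L)$ as the quotient of $T(\bar L)$ by the Lie commutator relations, well-definedness amounts to verifying
\[
\bigl((u\otimes y)\cdot \bar x_1\bigr)\cdot \bar x_2 - \bigl((u\otimes y)\cdot \bar x_2\bigr)\cdot \bar x_1 = (u\otimes y)\cdot \overline{[x_1,x_2]},
\]
which after a direct expansion reduces precisely to the Leibniz identity $[x_1,[x_2,y]]-[x_2,[x_1,y]]=[[x_1,x_2],y]$ in $L$. Once well-definedness is in hand, PBW for $U(\bar L)$ guarantees that the elements $u\otimes y$ are linearly independent, and the remaining verifications are routine: the canonical $\phi$ satisfies the required Leibniz relation, $\pi$ is compatible with both actions, and any competing triple $(M',\phi',\pi')$ admits a unique bimodule morphism $U(\bar L)\otimes L\to M'$ sending $1\otimes y\mapsto\phi'(y)$, whose well-definedness is exactly the relation built into our right action. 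Translating back through Theorem~\ref{thm:EilengerghDefn} yields the desired linear isomorphism $Ud(L)\cong U(\bar L)\otimes L$.
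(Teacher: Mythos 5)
Your argument is correct in outline, but it takes a genuinely different route from the paper. The paper does not construct $Ud(L)$ explicitly: it treats the spanning of $Ud(L)$ by normal monomials $x_{i_1}\vdash\dots\vdash x_{i_k}\vdash y$ as the easy part and reduces the theorem to their linear independence, which it gets by applying Theorem~\ref{thm:ConfRepresentation} with $V=U(\bar L)$ --- the injective Leibniz map $L\to(\mathrm{Cur}\,\mathrm{gl}(V\oplus(L\otimes V)))^{(0)}$ extends to a dialgebra map on $Ud(L)$ under which the normal monomials are visibly independent by PBW for $U(\bar L)$. You instead build the universal object by hand as the $U(\bar L)$-bimodule $U(\bar L)\otimes L$, via the bimodule reformulation furnished by Theorem~\ref{thm:EilengerghDefn}; your well-definedness computation for the right action is exactly right (it reduces to the left Leibniz identity), and the compatibility checks for $\pi$ and $\phi$ go through. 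The two proofs are close relatives --- your right action is essentially the transpose of the paper's $\rho_0(x)$ restricted to $L\otimes V$, and both rest on PBW for $U(\bar L)$ --- but yours is the classical direct construction in the spirit of Loday--Pirashvili and Aymon--Grivel, whereas the paper's point is to derive Engel, PBW and Ado uniformly from the single conformal representation of Theorem~\ref{thm:ConfRepresentation}. Two small spots to tighten: the universal $\bar D$ is really the augmentation ideal $U^{+}(\bar L)$ rather than all of $U(\bar L)$ (harmless, since you only use $U(\bar L)$ as the unital acting algebra, and $\pi$ need not be onto for the triple to define a diassociative structure); and in the final step the comparison must be against dialgebra morphisms, so given a competing $D$ you should first make $D$ a unital $U(\bar L)$-bimodule through $U(\bar L)\to\bar D\oplus\Bbbk 1$ and then check that the induced map $u\otimes y\mapsto u\cdot\phi'(y)$ preserves $\vdash$ and $\dashv$, with uniqueness coming from the fact that $\bar x_1\cdots\bar x_k\otimes y=(1\otimes x_1)\vdash\dots\vdash(1\otimes x_k)\vdash(1\otimes y)$ generates everything.
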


As in the case of Lie algebras, the main technical difficulty in the
proof of the PBW Theorem for Leibniz algebras is to show that ``normal"
monomials are linearly independent. In \cite{BokutChen2009}, another
proof of this independence was obtained by making use of Gr\"obner---Shirshov
bases theory for diassociative algebras. However, one may just apply
Theorem~\ref{thm:ConfRepresentation} to $V=U(\bar L)$,
see \cite{Kol2008b} for details.

Another interesting question is similar to the Ado Theorem: Whether
a finite-dimensional Leibniz algebra can be embedded into a
finite-dimensional diassociative algebra?
It turns out, the answer is positive. Indeed, it is enough to apply
Theorem~\ref{thm:ConfRepresentation} to $V=\Bbbk $, a trivial 1-dimensional
module over $\bar L$. In particular, we may conclude that an $n$-dimensional
Leibniz algebra can be embedded into a diassociative algebra $D$ such
that $\dim D\le 2(n+1)^2$.

\section{Di-Jordan algebras}

A diassociative algebra $D$ turns into a Leibniz algebra $D^{(-)}$
if we define the bracket $[x,y]=x\vdash y - y\dashv x$.
This is natural to expect that if we define new operation
\[
 x\circ y =  x\vdash y + y\dashv x, \quad x,y\in D,
\]
then the algebra $D^{(+)}=(D,\circ ) $
 obtained would be a noncommutative analogue
of a Jordan algebra. Roughly speaking, it relates to Jordan algebras in the
same way as Leibniz algebras relate to Lie algebras.

This is indeed a di-Jordan algebra; the commutativity identity turns
into 
\[
\Psi_1(x_1x_2-x_2x_1)=x_1\dashv x_2 - x_2\vdash x_1,
\]
so we may
describe this algebra with only one operation.
Objects of this type appeared also in~\cite{VF2009, Brem2009}.

\begin{definition}
A di-Jordan algebra is a linear space with a bilinear product satisfying the
following identities:
\begin{equation}   \label{eq:JorDiasAlg}
\begin{gathered}[]
[x_1, x_2]x_3= 0, \\
(x_1^2,x_2,x_3)=2(x_1,x_2,x_1x_3),
\quad
x_1(x_1^2 x_2)=x_1^2(x_1 x_2).
\end{gathered}
\end{equation}
\end{definition}

Here $[a,b]$ and $(a,b,c)$ stand for the commutator $ab-ba$ and associator $(ab)c-a(bc)$,
respectively. The first identity in \eqref{eq:JorDiasAlg} comes from the 0-identities,
the second and third appear from the Jordan identity.
In \cite{BremPeresi2010} these algebras were called
semi-special quasi-Jordan algebras.

Recall that a Jordan algebra $J$ is said to be special if there exists
an associative algebra $A$ such that $J\subseteq A^{(+)}$. The class of
all homomorphic images of all special Jordan algebras is a variety denoted
by $\mathrm{SJ}$.
This is well-known that $\mathrm{SJ}$ does not coincide with the variety
of all Jordan algebras.
Those defining identities of $\mathrm{SJ}$ that do
not hold in all Jordan algebras
are called special identities (or s-identities, for short).

I was shown in \cite{Glenie1970} that the minimal degree of an
s-identity is equal to~8. However, the description of all s-identities
is still an open problem.

For di-Jordan algebras, the same theory makes sense.

\begin{definition}[{\cite{BremPeresi2010}}]
A di-Jordan algebra $J$ is said to be special if
there exists a diassociative algebra $D$ such that
$J\subseteq D^{(+)}$.
\end{definition}

It is clear that the class of all homomorphic images of all
special di-Jordan algebras is a variety. Let us denote this variety
by $\mathrm{DiSJ}$. The notion of an s-identity for di-Jordan algebras
is a natural generalization of s-identities for Jordan algebras.

The following statement was proved in  \cite{BremPeresi2010}
by making use of computer algebra methods.

\begin{theorem}[{\cite{BremPeresi2010}}]\label{thm:Di-Sidentities}
{\em 1.} For di-Jordan algebras, there are no s-identities of degree $\le 7$;\\
{\em 2.} There exists an identity of degree~8 that holds on all special di-Jordan
algebras and on all Jordan algebras, but does not hold on all
di-Jordan algebras.
\end{theorem}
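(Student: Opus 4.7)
My plan is to reduce both parts, via the current construction, to classical results on Jordan s-identities, ultimately Glennie's theorem. The main tool is Theorem~\ref{thm:CurrEmbedding}: every di-Jordan algebra $D$ embeds into $(\mathrm{Cur}\,\hat D)^{(0)}$, so any polylinear di-Jordan identity can be tested on the current di-Jordan algebras $(\mathrm{Cur}\,J)^{(0)}$ built from ordinary Jordan algebras $J$. The explicit products
\[
(1\otimes u+T\otimes v)\vdash(1\otimes u'+T\otimes v')=1\otimes uu'+T\otimes uv',
\]
\[
(1\otimes u+T\otimes v)\dashv(1\otimes u'+T\otimes v')=1\otimes uu'+T\otimes vu'
\]
lead, by induction on the number of operations (equivalently, via the decomposition of the polylinear part of the free di-Jordan algebra as $n$ copies of the polylinear free Jordan algebra indexed by the ``centre'' position), to the normal form
\[
f(a_1,\dots,a_n)=1\otimes g_0(u_1,\dots,u_n)+T\otimes\sum_{k=1}^{n}h_k(u_1,\dots,u_{k-1},v_k,u_{k+1},\dots,u_n)
\]
for any polylinear di-Jordan polynomial $f$ evaluated on $a_i=1\otimes u_i+T\otimes v_i$, where $g_0,h_1,\dots,h_n$ are polylinear Jordan polynomials of degree $n$ determined by $f$. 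Hence $f$ vanishes on $(\mathrm{Cur}\,J)^{(0)}$ if and only if $g_0$ and every $h_k$ is a Jordan identity on $J$.

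For part~1, let $f$ be a polylinear di-Jordan identity of degree $n\le 7$ holding on all special di-Jordan algebras. Whenever $J\subseteq A^{(+)}$ is special Jordan, one has $(\mathrm{Cur}\,J)^{(0)}\subseteq ((\mathrm{Cur}\,A)^{(+)})^{(0)} = ((\mathrm{Cur}\,A)^{(0)})^{(+)}$, which is special di-Jordan (being of the form $E^{(+)}$ for the diassociative algebra $E=(\mathrm{Cur}\,A)^{(0)}$). So the hypothesis forces each of $g_0,h_1,\dots,h_n$ to be a Jordan identity of degree $\le 7$ on every special Jordan algebra. Glennie's theorem~\cite{Glenie1970} then upgrades them to identities on every Jordan algebra, so $f$ vanishes on every $(\mathrm{Cur}\,J)^{(0)}$ and hence on every di-Jordan algebra.

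For part~2, take Glennie's degree-$8$ s-identity $g(x_1,\dots,x_8)$ and set $f=\Psi_1(g)-\Psi_2(g)$. By Theorem~\ref{thm:Dias-Identity} each $\Psi_k(g)$ is an identity on every special di-Jordan algebra, so $f$ is too. On any ordinary Jordan algebra viewed as di-Jordan (with $\vdash=\dashv$) all $\Psi_k(g)$ collapse to $g$, giving $f=g-g=0$. Evaluating $f$ on $(\mathrm{Cur}\,J)^{(0)}$ via the normal form above yields
\[
f(a_1,\dots,a_8)=T\otimes\bigl(g(v_1,u_2,u_3,\dots,u_8)-g(u_1,v_2,u_3,\dots,u_8)\bigr).
\]
For $J$ the Albert algebra and the choice $v_1=u_1$, $v_2=0$, and $u_i$ generic, this reduces to $T\otimes g(u_1,\dots,u_8)\ne 0$, since Glennie's identity fails on the Albert algebra. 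Hence $f$ is not an identity of all di-Jordan algebras.

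The main technical obstacle is the inductive verification of the normal-form formula for $f(a_1,\dots,a_n)$ on $(\mathrm{Cur}\,J)^{(0)}$; once the ``centre-indexed'' decomposition of the polylinear free di-Jordan algebra (implicit in the discussion after Definition~\ref{defn:Var-Dialg}) is in place, the rest is bookkeeping with the explicit $\vdash,\dashv$ formulas above. Glennie's theorem then does the remaining work for part~1 and produces the explicit s-identity $\Psi_1(g)-\Psi_2(g)$ for part~2; the streamlined identity of~\cite{BremPeresi2010} is presumably a simpler representative of the same T-ideal coset modulo di-Jordan identities.
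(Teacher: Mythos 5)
The paper itself gives no proof of this statement: it is quoted from \cite{BremPeresi2010}, where it was established by computer algebra methods. Your argument is therefore a genuinely different, conceptual route, and as far as I can check it is sound. The normal form you derive for a polylinear dialgebra polynomial evaluated at $a_i=1\otimes u_i+T\otimes v_i$ in $(\mathrm{Cur}\,J)^{(0)}$ is correct (inductively, the $T$-component of a monomial is the underlying Jordan monomial with exactly the centre variable $u_k$ replaced by $v_k$, matching the decomposition $f=\sum_k\Psi_k(f_k)$ modulo $0$-identities), and together with Theorem~\ref{thm:CurrEmbedding} and the identification $((\mathrm{Cur}\,A)^{(0)})^{(+)}=(\mathrm{Cur}\,A^{(+)})^{(0)}$ it reduces both parts to Glennie's theorem. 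In effect you are re-deriving, via the current construction, the content of Theorems~\ref{thm:DiSJ=SJDi} and~\ref{thm:SId-Correspond} (Voronin's later results, which the paper states but does not use to prove Theorem~\ref{thm:Di-Sidentities}); what your approach buys is a computer-free proof and an explicit polylinear degree-$8$ witness $\Psi_1(g)-\Psi_2(g)$ for part~2. Two points need tightening. First, part~1 concerns all identities of degree $\le 7$, not only polylinear ones, so you should add the standard characteristic-zero reduction by linearization and restitution. Second, in part~2 the vanishing of $\Psi_k(g)$ on all special di-Jordan algebras does not follow from Theorem~\ref{thm:Dias-Identity} alone, since that theorem speaks only of the defining identities of a variety; either invoke Theorem~\ref{thm:SId-Correspond}, or (staying self-contained) note that every special di-Jordan algebra embeds into the set of elements $1\otimes u+T\otimes v$ of $(\mathrm{Cur}\,\hat E^{(+)})^{(0)}$ for a diassociative envelope $E$, where your normal form gives $\Psi_k(g)\mapsto 1\otimes g(u_1,\dots,u_n)+T\otimes g(u_1,\dots,v_k,\dots,u_n)=0$ because $\hat E^{(+)}$ is special Jordan.
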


On the other hand, the variety $\mathrm{SJ}$ leads to the notion of
a di-$\mathrm{SJ}$-algebra by Definition~\ref{defn:Var-Dialg}. It turns
out that these two different approaches lead to the same class of
dialgebras.

\begin{theorem}[{\cite{Voron2010}}]\label{thm:DiSJ=SJDi}
The variety of di-$\mathrm{SJ}$-algebras coincides with $\mathrm{DiSJ}$.
\end{theorem}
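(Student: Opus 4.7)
The plan is to bootstrap from Theorem~\ref{thm:CurrEmbedding} (for the associative variety) through a ``plus'' operation on conformal algebras. For any conformal algebra $C$, I would define $C^{(+)}$ to be the same $H$-module equipped with the new $\lambda$-bracket $\{a_\lambda b\} := (a_\lambda b) + (b_{-T-\lambda}a)$. A direct check against Definition~\ref{defn:Conformal Algebra}, using the sesquilinearity axioms, shows $C^{(+)}$ is again a conformal algebra, and one verifies the two compatibilities
\[
(C^{(+)})^{(0)} = (C^{(0)})^{(+)}, \qquad (\mathrm{Cur}\,B)^{(+)} = \mathrm{Cur}(B^{(+)})
\]
for every ordinary algebra $B$ (the first by comparing evaluations at $\lambda=0$ and $\lambda=-T$; the second by $H$-bilinearity from the case $f=g=1$).

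With these in hand, the inclusion $\mathrm{DiSJ} \subseteq$ di-$\mathrm{SJ}$ becomes routine. Given $J \subseteq D^{(+)}$ with $D$ diassociative, Theorem~\ref{thm:CurrEmbedding} embeds $D$ into $(\mathrm{Cur}\,B)^{(0)}$ for some associative $B$, whence
\[
J \subseteq D^{(+)} \subseteq \bigl((\mathrm{Cur}\,B)^{(0)}\bigr)^{(+)} = \bigl(\mathrm{Cur}(B^{(+)})\bigr)^{(0)}.
\]
Since $B^{(+)}\in\mathrm{SJ}$, Theorem~\ref{thm:CurrEmbedding} in the converse direction shows that $J$ is a di-$\mathrm{SJ}$-algebra; the variety di-$\mathrm{SJ}$ is closed under homomorphic images by Theorem~\ref{thm:Dias-Identity}, which handles the rest of $\mathrm{DiSJ}$. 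For the opposite inclusion, suppose $J$ is di-$\mathrm{SJ}$, so $J \subseteq (\mathrm{Cur}\,A)^{(0)}$ for some $A\in\mathrm{SJ}$. Write $A$ as the image of a special Jordan algebra $A' \subseteq B^{(+)}$ under a surjection $\pi: A' \to A$. The induced dialgebra surjection $\phi: (\mathrm{Cur}\,A')^{(0)} \to (\mathrm{Cur}\,A)^{(0)}$ sends its sub-dialgebra $J' := \phi^{-1}(J)$ onto $J$. Since $(\mathrm{Cur}\,A')^{(0)} \subseteq (\mathrm{Cur}(B^{(+)}))^{(0)} = ((\mathrm{Cur}\,B)^{(0)})^{(+)}$ and $(\mathrm{Cur}\,B)^{(0)}$ is diassociative by Theorem~\ref{thm:CurrEmbedding}, $J'$ is a special di-Jordan algebra, and so $J \in \mathrm{DiSJ}$.

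The main obstacle, modest as it is, lies in verifying the two compatibility identities of the first paragraph. A preliminary point is to confirm that $\{a_\lambda b\}$ lies in $C[\lambda]$ rather than in $C[T,\lambda]$: the apparently $T$-dependent term $(b_{-T-\lambda}a)$ is well defined only because $T$ acts on the coefficients $c_n$ in the expansion $(b_\mu a) = \sum \mu^n c_n/n!$, and the sesquilinearity axioms guarantee that the result is again a polynomial in $\lambda$ with coefficients in $C$. Once this and the resulting commutation of the $(+)$ and $(0)$ constructions are in place, the theorem reduces to a pair of short diagram chases through Theorem~\ref{thm:CurrEmbedding}.
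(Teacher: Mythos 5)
The paper states this theorem only as a citation of \cite{Voron2010} and gives no proof of its own, so there is nothing internal to compare against; your argument is correct and is exactly the argument the paper's machinery is designed to produce, namely Theorem~\ref{thm:CurrEmbedding} combined with the interchange of the $(0)$ functor with the $(\pm)$ constructions on conformal and current algebras. The two compatibility identities you isolate do hold --- the only delicate point, the evaluation $(b_{-T-\lambda}a)|_{\lambda=-T}=(b_{0}\,a)$ under the left-substitution convention, which is what makes $(C^{(+)})^{(0)}=(C^{(0)})^{(+)}$ consistent with describing a commutative dialgebra by the single operation $x\vdash y$ --- and the two inclusions then follow as you describe.
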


This fact allows to deduce a correspondence between s-identities for
Jordan algebras and dialgebras.

\begin{theorem}[{\cite{Voron2010}}]\label{thm:SId-Correspond}
Let $f(x_1,\dots, x_n)$ be a polylinear s-identity for Jordan algebras. Then
$\Psi_k f$, $k=1,\dots, n$,
is an s-identity for di-Jordan algebras.
Conversely, if $g(x_1,\dots, x_n)$ is an
s-identity for di-Jordan algebras then
\[
  g(x_1,\dots, x_n)=\sum\limits_{k=1}^n g_k, \quad g_k=\Psi_k(f_k)
\]
for some nonassociative polynomials $f_k(x_1, \dots, x_n)$,
and at least one of $f_k$ is an s-identity for Jordan algebras.
\end{theorem}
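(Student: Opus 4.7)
My plan is to combine the identity-based characterization of di-$\mathcal V$-algebras from Theorem~\ref{thm:Dias-Identity} with a substitution inside the current dialgebra $(\mathrm{Cur}\,A)^{(0)}$ that isolates the individual pieces $\Psi_k(f_k)$ of any polylinear dialgebra polynomial.

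For the first assertion, if $f$ is a polylinear s-identity for Jordan algebras then $f=0$ is a polylinear identity of $\mathrm{SJ}$, so by Theorem~\ref{thm:Dias-Identity} together with Theorem~\ref{thm:DiSJ=SJDi} each $\Psi_k(f)$ vanishes on every algebra in $\mathrm{DiSJ}$. To produce a di-Jordan algebra on which $\Psi_k(f)$ does not vanish, I would view an arbitrary Jordan algebra $J$ as a di-Jordan algebra $\tilde J$ by setting $a\vdash b=a\dashv b=ab$: commutativity of $J$ makes this consistent with $a\vdash b=b\dashv a$, and under the collapse $\vdash=\dashv$ every $\Psi_j(h)$ reduces to $h$ itself, so $\tilde J$ satisfies all di-Jordan identities by Theorem~\ref{thm:Dias-Identity}. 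In $\tilde J$, $\Psi_k(f)$ also collapses to $f$, which is nonzero on some Jordan algebra since $f$ is a genuine s-identity.

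For the converse, the first step is a normal-form decomposition: using the 0-identities \eqref{eq:0-ident} and the di-Jordan relation $a\vdash b=b\dashv a$, every polylinear monomial in $\vdash,\dashv$ can be rewritten so that exactly one leaf plays the role of a well-defined ``center'', and grouping monomials by their centers yields a representation
\[
g(x_1,\dots,x_n) = \sum_{k=1}^n \Psi_k(f_k)
\]
for polylinear Jordan polynomials $f_k$. Next, for an arbitrary associative algebra $A$, a direct calculation with $(f\otimes a)\vdash (g\otimes b)=f(0)g\otimes ab$ and $(f\otimes a)\dashv (g\otimes b)=fg(0)\otimes ab$ identifies $(\mathrm{Cur}\,A)^{(0),(+)}$ with $(\mathrm{Cur}\,A^{(+)})^{(0)}$, exhibiting the latter as a special di-Jordan algebra on which $g$ must vanish. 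Substituting $\alpha_k=T\otimes a_k$ and $\alpha_j=1\otimes a_j$ for $j\ne k$ annihilates every summand $\Psi_j(f_j)$ with $j\ne k$ (the ``non-center'' slot then holds $T\otimes a_k$, which contributes a factor $T(0)=0$ to the $H$-part) while sending $\Psi_k(f_k)$ to $T\otimes f_k(a_1,\dots,a_n)_{A^{(+)}}$; the identity $g=0$ forces $f_k$ to hold on every special Jordan algebra.

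Finally, if every $f_k$ were an identity of the full variety of Jordan algebras then Theorem~\ref{thm:Dias-Identity} applied to $\mathcal V$ equal to the variety of Jordan algebras would make each $\Psi_k(f_k)$, and hence $g$, a di-Jordan identity, contradicting the hypothesis that $g$ is a proper s-identity. So at least one $f_k$ is a genuine s-identity for Jordan algebras. The main obstacle is the normal-form step: one must verify that the 0-identities together with the di-Jordan commutativity identity suffice to assign a canonical center to every polylinear monomial in $\vdash,\dashv$, so that the $f_k$ are well defined as commutative Jordan polynomials modulo Jordan identities and the current-algebra substitution unambiguously picks out each of them.
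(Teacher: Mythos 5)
The paper does not actually prove Theorem~\ref{thm:SId-Correspond}; it is quoted from \cite{Voron2010} without argument, so there is no in-text proof to compare against. Judged on its own, your argument is sound and is essentially the natural one. The forward direction is correct: $\Psi_k(f)$ holds on $\mathrm{DiSJ}$ by Theorems~\ref{thm:Dias-Identity} and~\ref{thm:DiSJ=SJDi}, and the collapse $\vdash=\dashv$ turning a Jordan algebra $J$ into a di-Jordan algebra on which $\Psi_k(f)$ reduces to $f$ is exactly the right witness for non-vanishing. For the converse, the substitution $\alpha_k=T\otimes a_k$, $\alpha_j=1\otimes a_j$ in the current dialgebra is the key trick, and your computation that $\Psi_j(f_j)(h_1\otimes a_1,\dots,h_n\otimes a_n)=h_j(T)\prod_{i\ne j}h_i(0)\otimes f_j(a_1,\dots,a_n)$ correctly isolates $T\otimes f_k(a_1,\dots,a_n)$, forcing $f_k$ to vanish on $A^{(+)}$ for every associative $A$. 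Two small points deserve tightening rather than worry. First, the ``normal-form'' step you flag as the main obstacle is already supplied by the operadic statement quoted in the paper: the image of $\mathrm{Dialg}$ modulo the 0-identities is $\mathrm{Alg}\otimes\mathcal E$ with $\dim\mathcal E(n)=n$, and the summands $\Psi_k(\mathrm{Alg}(n))$ realize this grading, so every polylinear dialgebra polynomial decomposes (even uniquely) as $\sum_k\Psi_k(f_k)$ with nonassociative $f_k$; no separate rewriting lemma is needed. Second, you apply Theorem~\ref{thm:Dias-Identity} to identities of $\mathrm{SJ}$ (respectively, of all Jordan algebras) that need not be among the chosen defining identities; in characteristic $0$ the full set of polylinear identities of a variety is itself a defining set, so this is legitimate, but it should be said explicitly.
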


Note that Theorem~\ref{thm:SId-Correspond} works for polylinear identities
only, so it says nothing about the identity
from Theorem~\ref{thm:Di-Sidentities}(2).

A series of classical results for special Jordan algebras can be transferred
to dialgebras. In particular, the Shirshov---Cohn Theorem states that
every 2-generated Jordan algebra is special. It turns out that the free
2-generated di-Jordan algebra is special, but its homomorphic image
may not be special \cite{Voron2010}. However, Theorem~\ref{thm:CurrEmbedding}
implies that every 1-generated di-Jordan algebra is special.

Another problem on di-Jordan algebras concerns their relation to Leibniz algebras.
The classical Tits---Kantor---Koecher construction allows to build a
Lie algebra $T(J)$ for a given Jordan algebra $J$ in such a way that structure
of $J$ is closely related with the structure of $T(J)$.
This is natural to expect \cite{VF2009} that a similar construction for a di-Jordan algebra
should lead to Lie dialgebra, i.e., Leibniz algebra.

Conformal algebras allow to solve this problem.

Let $J$ be a di-Jordan algebra, and let $\hat J$ stands
for the split null extension $\bar J\oplus J$
(see Theorem~\ref{thm:EilengerghDefn}).
This is a Jordan algebra, and it follows from Theorem~\ref{thm:CurrEmbedding}
that $J\subseteq (\mathrm{Cur}\,\hat J)^{(0)}$.

Denote by
\[
T(\hat J) = {\hat J}^+ \oplus S(\hat J) \oplus {\hat J}^-
\]
the Tits---Kantor---Koecher construction \cite{Kantor1964,Koecher1967,Tits1962}
for~$\hat J$.
Here ${\hat J}^{\pm}$ are linear spaces isomorphic to $\hat J$, and
$S(\hat J)\subseteq \mathrm{End}\, \hat J\oplus  \mathrm{Der}\, \hat J$
is spanned by
\[
  U_{a,b} = L_{ab} + [L_a,L_b], \quad a,b\in \hat J,
\]
where $L_x$ denotes the operator of left multiplication:
$L_x(y)=xy$.
The images of $a\in J$ in the isomorphic copies $J^{\pm}$ are denoted
by~$a^\pm$.
This is a Lie algebra, $J^+$ and $J^-$ are its abelian subalgebras, and
$[a^-, b^+] = L_{ab} + [L_a, L_b]$ for $a,b\in J$. Therefore,
$\mathcal L(J) = (\mathrm{Cur}\,T(\hat J))^{(0)}$ is a
Leibniz algebra.
Then the elements $1\otimes (a+J_0)^\pm + T\otimes a^\pm \in \mathcal L(J)$,
$a\in J$,
generate a Leibniz algebra $T(J)$

\begin{theorem}[{\cite{KolGub2009}}]
Let $J$ be a di-Jordan algebra.
Then $T(J)$ is a solvable Leibniz algebra if and only if
$J$ is a Penico solvable \cite{JacJordan}; $T(J)$ is nilpotent if
and only if so is~$J$.
\end{theorem}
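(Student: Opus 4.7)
The plan is to reduce both equivalences to the classical Tits--Kantor--Koecher theorem for Jordan algebras, by passing through the split null extension $\hat J = \bar J \oplus J$ and comparing the Leibniz algebra $T(J)$ with the Lie algebra $T(\hat J)$ via the ambient embedding $T(J) \subseteq (\mathrm{Cur}\, T(\hat J))^{(0)}$. Concretely, I will establish three chains of equivalences: (i) Penico solvability (respectively, nilpotency) of the di-Jordan algebra $J$ is equivalent to the same property of the Jordan algebra $\hat J$; (ii) by the classical TKK theorem \cite{JacJordan}, this is equivalent to solvability (respectively, nilpotency) of $T(\hat J)$; and (iii) finally, this is equivalent to solvability (respectively, nilpotency) of the Leibniz algebra $T(J)$.

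Step (i) follows from the structural identity $J^2 = 0$ inside $\hat J$: any Jordan monomial in $\hat J$ involving two or more $J$-factors vanishes, so the Penico derived series and the lower central series of $\hat J$ decompose compatibly with $\bar J \oplus J$ and match the corresponding dialgebra series of $J$ built from $\vdash, \dashv$. Step (ii) is classical. For step (iii), every element of $T(J) \subseteq (\mathrm{Cur}\, T(\hat J))^{(0)}$ has the form $1 \otimes x + T \otimes y$ with $x, y \in T(\hat J)$, and a direct calculation using $a \vdash b = (a_\lambda b)|_{\lambda = 0}$ and $a \dashv b = (a_\lambda b)|_{\lambda = -T}$ yields the Leibniz bracket formula
\[
[1 \otimes x + T \otimes y,\, 1 \otimes x' + T \otimes y'] = 1 \otimes [x, x'] + T \otimes [x, y'].
\]
Iterated Leibniz brackets therefore have first component equal to iterated Lie brackets in $T(\hat J)$ and second component controlled by the same iterated brackets, so solvability or nilpotency of $T(\hat J)$ propagates to $(\mathrm{Cur}\, T(\hat J))^{(0)}$ and hence to the subalgebra $T(J)$, giving the forward direction of step (iii).

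For the converse direction, the map $\pi\colon T(J) \twoheadrightarrow T(\bar J)$, $1 \otimes x + T \otimes y \mapsto x$, is a surjective Leibniz homomorphism onto the Lie subalgebra $T(\bar J) \subseteq T(\hat J)$ generated by the elements $\bar a^\pm$, with kernel $K = \{T \otimes y \in T(J)\}$ an abelian Leibniz ideal (the bracket formula above produces zero as soon as the left argument has zero first component). Solvability of $T(J)$ thus descends to solvability of $T(\bar J)$ via the short exact sequence $0 \to K \to T(J) \to T(\bar J) \to 0$, and combined with steps (ii) and (i) this handles the solvability case. For nilpotency, one additionally observes that iterated Leibniz brackets of the generators $\alpha^\pm(a) = 1 \otimes \bar a^\pm + T \otimes a^\pm$ have $T$-components equal to iterated Jordan triple products on $\hat J$ genuinely involving $J$-factors, so nilpotency of $T(J)$ forces these iterated products in $\hat J$ to vanish and thereby yields nilpotency of the $\bar J$-action on $J$ inside $\hat J$.

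The hard part will be the converse for nilpotency: an abelian Leibniz extension of a solvable algebra is automatically solvable, but an abelian extension of a nilpotent algebra is nilpotent only when the adjoint action of the quotient on the ideal is itself nilpotent. The main technical step is therefore to verify that nilpotency of $T(J)$ forces the action of $T(\bar J)$ on $K$ to be nilpotent and that this action faithfully encodes the action of $\bar J$ on $J$ inside $\hat J$, which requires tracking the explicit form of the generators and the bracket formula through iterated computations.
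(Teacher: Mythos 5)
The paper does not actually prove this theorem---it is stated with a citation to \cite{KolGub2009}---but the construction the paper sets up (realizing $T(J)$ as the Leibniz subalgebra of $(\mathrm{Cur}\,T(\hat J))^{(0)}$ generated by the elements $1\otimes(a+J_0)^\pm+T\otimes a^\pm$) is exactly the route you take, and your key computation is right: for the current Lie conformal algebra the induced Leibniz bracket is indeed $[1\otimes x+T\otimes y,\,1\otimes x'+T\otimes y']=1\otimes[x,x']+T\otimes[x,y']$, so the projection onto the first tensor factor is a homomorphism onto a Lie subalgebra of $T(\hat J)$ with abelian kernel $K$ annihilating $T(J)$ on the left. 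The forward directions and the solvability converse then follow as you describe, modulo one imprecision: the quotient $T(J)/K$ is the Lie subalgebra of $T(\hat J)$ generated by the $\bar a^{\pm}$, which is \emph{not} literally the TKK algebra of $\bar J$---its middle part is spanned by operators $U_{\bar a,\bar b}$ acting on all of $\hat J$, so it is an extension of the inner TKK algebra of $\bar J$ by the piece recording the $\bar J$-action on $J$. This is harmless for solvability (quotients of solvable are solvable, and $\bar J$ Penico solvable forces $J$ Penico solvable since all dialgebra products factor through $\bar J$ and $J_0$ has square zero), but it is exactly the piece that matters for nilpotency.

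That is also where your proposal stops short. You correctly identify that the converse for nilpotency requires showing that vanishing of iterated Leibniz brackets of the generators forces nilpotency of the iterated $U$-operator action of $\bar J$ on $J$ inside $\hat J$ (equivalently, of the multiplication operators of the dialgebra $J$), and that this together with nilpotency of $\bar J$ yields nilpotency of $\hat J$ and hence of $J$; but you explicitly defer this as ``the main technical step'' rather than carrying it out. Since this reduction---matching the $T$-components of iterated brackets of the $\alpha^{\pm}(a)$ with the degree-$n$ terms of the multiplication algebra of $\hat J$ applied to $J^{\pm}$, uniformly over all bracketing patterns---is where the actual content of the nilpotency equivalence sits, the proposal as written is a correct and well-aimed plan rather than a complete proof. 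To close it, fix a basis of left-normed brackets (using the Leibniz identity to reduce arbitrary bracketings), compute that the $T$-component of a left-normed bracket of length $n$ in the generators is a length-$n$ TKK monomial in $T(\hat J)$ with exactly one factor from $J^{\pm}$ and the rest from $\bar J^{\pm}$, and verify that the span of such monomials over all $n\ge N$ vanishes if and only if the Penico/lower central series of the dialgebra $J$ terminates.
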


\section{On Embeddings of Conformal Algebras}

One of the basic facts about associative algebras states that
every finite-dimen\-sional associative algebra $A$ can be presented
by matrices. Indeed, even if $A$ does not contain a unit element,
we may consider $A^\#=A\oplus \Bbbk 1$, and then there is a
faithful representation $L:A\to \mathrm{End}\,A^\#$,
$L(a): x\to ax$.

For a conformal algebra $C$ of rank $n$ over $H$, the role of $\mathrm{End}\, A$ belongs
to $\mathrm{Cend}\,C$, which is isomorphic to the conformal algebra
of $n\times n$ matrices over the Weyl conformal algebra.
The following properties define an analogue of the unit element for
conformal algebras.

\begin{definition}[{\cite{Retakh2001}}]\label{defn:Unit}
Suppose $C$ is a conformal algebra. An element $e\in C$
is said to be a (conformal) unit in $C$ if
$(e_\lambda x)|_{\lambda=0} = x$ for all $x\in C$ and
$e_\lambda e = e$.
\end{definition}

Associative conformal algebra with a unit has a very natural structure.

\begin{proposition}[{\cite{Retakh2001}}]\label{prop:DiffAlgebra}
Let $C$ be a semisimple associative conformal algebra with a unit.
Then there exists an associative algebra $A$ with a locally
nilpotent derivation $\partial $ such that $C\simeq H\otimes A$
with respect to the operation
\[
  (f(T)\otimes a)_\lambda (g(T)\otimes b) = f(-\lambda )g(T+\lambda )\otimes
  a\exp\{\lambda \partial\}b,\quad a,b\in A,\ f,g\in H.
\]
\end{proposition}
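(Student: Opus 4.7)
The plan is to use the unit $e$ to extract a locally nilpotent derivation $\partial$ on $C$, to identify a canonical subspace $A \subset C$ that inherits the structure of an associative algebra with derivation, and then to appeal to semisimplicity to conclude that $C$ is free as an $H$-module, yielding the isomorphism $H \otimes A \cong C$ with the stated twisted product.

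First I would set $\partial := e_{(1)}$, the operator defined via $(e_\lambda c) = \sum_{n\ge 0} \frac{\lambda^n}{n!}(e_{(n)} c)$. Local nilpotency of $\partial$ is immediate since $(e_\lambda c) \in C[\lambda]$. Conformal associativity combined with $e_\lambda e = e$ yields $(e_\lambda(e_\mu c)) = ((e_\lambda e)_{\lambda+\mu} c) = (e_{\lambda+\mu} c)$; comparing coefficients of $\lambda^m \mu^n$ gives $e_{(m)} \circ e_{(n)} = e_{(m+n)}$, hence $e_{(n)} = \partial^n$ and $(e_\lambda c) = \exp(\lambda \partial) c$ for every $c \in C$.

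Next I would let $A$ denote the subspace of those $y \in C$ with $(y_\lambda e) = y$ as polynomials in $\lambda$; associativity applied to $(x_\mu(e_\nu e)) = ((x_\mu e)_{\mu+\nu} e)$ (together with $e_\nu e = e$) shows that $A$ is exactly the image of the idempotent $\pi_0 \colon c \mapsto (c_{(0)} e)$. Conformal associativity $(a_\lambda(b_\mu e)) = ((a_\lambda b)_{\lambda+\mu} e)$ applied with $b \in A$ yields $(a_\lambda b) \in A[\lambda]$ for $a, b \in A$, so $A$ is closed under the zero-product $a \cdot b := (a_{(0)} b)$, which is associative with unit $e$, and $A$ is also $\partial$-stable. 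The identity $(a_\lambda(e_\mu b)) = ((a_\lambda e)_{\lambda+\mu} b) = (a_{\lambda+\mu} b)$, valid because $(a_\lambda e) = a$ for $a \in A$, gives $(a_{(m)} \partial^n b) = (a_{(m+n)} b)$; at $m = 0$ this yields the formula $(a_\lambda b) = a \exp(\lambda\partial) b$, and combined with differentiating $(e_\lambda(a_\mu b)) = ((e_\lambda a)_{\lambda+\mu} b)$ at $\lambda = 0$ it establishes the Leibniz rule for $\partial$ on $A$.

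Finally I would invoke semisimplicity to conclude that $C$ is a free $H$-module; together with the decomposition $C = A \oplus TC$ (which requires the same freeness to verify $\ker \pi_0 = TC$), this yields the $H$-module isomorphism $H \otimes A \to C$, $f \otimes a \mapsto f(T) a$. Extending the $\lambda$-product from $A$ to $C$ by the sesquilinearity axioms $(Ta_\lambda b) = -\lambda(a_\lambda b)$ and $(a_\lambda Tb) = (T+\lambda)(a_\lambda b)$ then recovers the formula in the statement. The main obstacle is the freeness step: semisimplicity must be translated into the absence of $T$-torsion in $C$, typically by noting that any $T$-torsion ideal would be degenerate, contradicting semisimplicity. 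The algebraic key that makes the rest of the argument close up cleanly is the choice of $A$ as the set of fixed points of $y \mapsto (y_\lambda e)$, which yields a subspace that is simultaneously $\lambda$-product-closed, $\partial$-stable, and complementary to $TC$.
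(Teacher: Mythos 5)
The paper does not prove this proposition; it is quoted from Retakh's work, so there is no internal proof to compare against and your sketch has to stand on its own. Most of it does: the identity $(e_\lambda(e_\mu c))=((e_\lambda e)_{\lambda+\mu}c)$ giving $e_{(m)}e_{(n)}=e_{(m+n)}$ and hence $(e_\lambda c)=\exp(\lambda\partial)c$ with $\partial=e_{(1)}$ locally nilpotent; the fixed-point space $A=\{y:(y_\lambda e)=y\}=\mathrm{im}\,\pi_0$; closure of $A$ under all $\lambda$-products; the formula $(a_\lambda b)=a\exp(\lambda\partial)b$ from $(a_\lambda(e_\mu b))=((a_\lambda e)_{\lambda+\mu}b)$; and the Leibniz rule for $\partial$ on $A$ -- all of these steps check out.

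The genuine gap is the final step, and it lies precisely where semisimplicity must be used; ``torsion-freeness'' is not the right consequence to extract from it. First, $C$ is not assumed to be a finitely generated $H$-module (the Weyl conformal algebra, explicitly covered by the remark following the proposition, has infinite rank over $H$), so ``torsion-free implies free'' is not available. Second, and more seriously, freeness of $C$ as an $H$-module would not by itself give $C=H\cdot A$: the decomposition $C=A\oplus TC$ only yields $C\supseteq\bigoplus_n T^nA$ unless you also show $\bigcap_n T^nC=0$, which you never address. The argument that actually closes this up is: let $N=\ker\phi$ where $\phi:c\mapsto(c_\lambda e)$; since $(c_\lambda x)=(c_\lambda(e_\mu x))|_{\mu=0}=((c_\lambda e)_{\lambda+\mu}x)|_{\mu=0}$, any $c\in N$ annihilates all of $C$, so $N$ is an ideal with trivial multiplication and vanishes by semisimplicity. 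Then $\phi$ embeds $C$ into $A[\lambda]$, while $\phi(f(T)a)=f(-\lambda)a$ shows $\phi(H\cdot A)=A[\lambda]\supseteq\phi(C)$; injectivity forces $C=H\cdot A$, and the same map kills any relation $\sum_i f_i(T)a_i=0$ with the $a_i$ linearly independent, giving freeness. Replace the appeal to torsion with this annihilator argument and your sketch becomes a complete proof.
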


\begin{remark}
In particular, if $\partial =0$ then such $C$ is just the current algebra
$\mathrm{Cur}\,A$; if $A=\Bbbk[x]$ and $\partial = d/dx$ then $C$ is the
Weyl conformal algebra.
\end{remark}

This is the reason why the following problem \cite{Retakh2004} makes sense:
Is it possible to join a conformal unit to an associative conformal algebra.
Moreover, a finite associative conformal algebra can be embedded into
matrices over the conformal Weyl algebra if and only if one can join a
unit to this conformal algebra.
The following statement answers positively to this question.

\begin{theorem}[{\cite{Kol2010}}]\label{thm:UnitEmbedding}
If $C$ is a finite associative conformal algebra which is a
torsion-free $H$-module then there exists an associative
conformal algebra $C_e$ with a unit such that $C\subseteq C_e$.
\end{theorem}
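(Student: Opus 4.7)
The plan is to embed $C$ into the conformal endomorphism algebra $\mathrm{Cend}\,M$ of an auxiliary finite torsion-free $H$-module $M$, and then take $C_e$ to be the conformal subalgebra of $\mathrm{Cend}\,M$ generated by the image of $C$ together with the identity endomorphism $e$, which is automatically a conformal unit. The model is the classical embedding $A \hookrightarrow \mathrm{End}(A \oplus \Bbbk\cdot 1)$ for a non-unital associative algebra $A$: one enlarges the natural module so as to make the left regular representation faithful.

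Concretely, I would take $M = C \oplus Hv$, where $Hv$ is a rank-one free $H$-module with generator $v$. Since $C$ is finite and torsion-free over the PID $H = \Bbbk[T]$, it is free, so I fix a $\Bbbk$-subspace $C_0 \subset C$ with $C = H \otimes_{\Bbbk} C_0$ and define the $\Bbbk$-linear map $\sigma \colon C \to C_0[\lambda] \subseteq C[\lambda]$ by substituting $T \mapsto -\lambda$ in the $H$-expansion of an element, so that $\sigma(Ta,\lambda) = -\lambda\,\sigma(a,\lambda)$. The conformal $C$-module structure on $M$ is given on the $C$-summand by the $\lambda$-bracket of $C$ itself, and on $Hv$ by $a\cdot_\lambda (f(T)v) := f(T+\lambda)\,\sigma(a,\lambda) \in C[\lambda] \subseteq M[\lambda]$. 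The crucial verification is the associative module axiom $a\cdot_\lambda (b\cdot_\mu v) = (a_\lambda b)\cdot_{\lambda+\mu} v$, which reduces to the identity $a_\lambda \sigma(b,\mu) = \sigma(a_\lambda b, \lambda+\mu)$ in $C[\lambda,\mu]$; this is a direct consequence of the second $H$-linearity axiom $a_\lambda(Tb) = (T+\lambda)(a_\lambda b)$, since substituting $T \mapsto -\mu$ in the second argument of a $\lambda$-product is the same as substituting $T \mapsto -(\lambda+\mu)$ in the output.

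With this in place, the left regular representation $L \colon C \to \mathrm{Cend}\,M$, $L(a)_\lambda m = a\cdot_\lambda m$, is a homomorphism of associative conformal algebras, and it is injective because $L(a)_\lambda v = \sigma(a,\lambda)$ and $\sigma$ is $\Bbbk$-linearly injective (by freeness of $C$ over $H$). Since $M$ is finite and torsion-free over $H$, $\mathrm{Cend}\,M$ is an associative conformal algebra isomorphic to matrices over the Weyl conformal algebra, and it carries the conformal unit $e$ given by the identity endomorphism: $e_\lambda m = m$, so that $(e_\lambda x)|_{\lambda=0} = x$ and $e_\lambda e = e$ hold automatically. Defining $C_e$ to be the conformal subalgebra of $\mathrm{Cend}\,M$ generated by $L(C)$ and $e$ produces an associative conformal algebra with unit into which $C$ embeds.

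The main obstacle is the construction of the conformal $C$-module structure on the auxiliary summand $Hv$: the map $\sigma$ is only $\Bbbk$-linear, not $H$-linear, and one must check that the resulting action is compatible with the associative $\lambda$-bracket on $C$. Finiteness and torsion-freeness of $C$ enter precisely here, ensuring both the existence of $\sigma$ (via freeness over the PID $H$) and its injectivity, which in turn yields faithfulness of $L$. Everything else—the homomorphism property of $L$, the conformal unit axioms for $e$, and the closure of the generated subalgebra—is either formal or a direct appeal to facts recalled earlier in the paper.
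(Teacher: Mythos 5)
The paper itself only cites \cite{Kol2010} for this theorem and contains no proof to compare with, so I judge your argument on its own terms; it has a genuine gap at its central step. The claimed identity $(a_\lambda\,\sigma(b,\mu))=\sigma((a_\lambda b),\lambda+\mu)$ is \emph{not} a consequence of the sesquilinearity axiom. By sesquilinearity both sides transform the same way under $b\mapsto f(T)b$, so the identity reduces to the case $b\in C_0$, where $\sigma(b,\mu)=b$ and the claim becomes $(a_\lambda b)=\sigma((a_\lambda b),\lambda+\mu)$, i.e., it demands that $(a_\lambda b)\in C_0[\lambda]$ --- that the structure constants of $C$ relative to your chosen $H$-basis contain no $T$. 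An arbitrarily chosen free basis $C_0$ has no reason to satisfy this, and producing one that does is essentially equivalent to presenting $C$ in the ``differential algebra'' form of Proposition~\ref{prop:DiffAlgebra}; that is the hard content of the theorem, not a formal step. Sesquilinearity only governs how $T$ applied to the \emph{arguments} propagates to the output; it says nothing about the $T$-dependence of the product of two basis elements.

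A decisive sanity check: your construction uses finiteness only to obtain freeness of $C$ over the PID $H$, so it would apply verbatim to any free torsion-free $H$-module --- in particular to the infinite counterexample given in the paper immediately after the theorem ($C$ free on $W=\Bbbk[x]\oplus\Bbbk w$), for which the paper shows no unital envelope exists at all. There, taking $C_0=W$, $a=w$, $b=x$, one gets $w\cdot_\lambda(x\cdot_\mu v)=(w_\lambda x)=Tw$, while $(w_\lambda x)\cdot_{\lambda+\mu}v=(Tw)\cdot_{\lambda+\mu}v=-(\lambda+\mu)\,\sigma(w,\lambda+\mu)=-(\lambda+\mu)w$; these differ, so your module axiom visibly fails. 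Since the theorem is false without finiteness, any proof that does not use finiteness in an essential way cannot be correct. The outer frame of your plan --- embed $C$ into $\mathrm{Cend}\,M$ for a finite torsion-free $M$, which is unital, and take the subalgebra generated by $L(C)$ and the identity --- is the right target, but constructing a faithful finite conformal module (equivalently, a compatible action on the adjoined summand $Hv$) is precisely where the real work of \cite{Kol2010} lies.
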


The last Theorem does not hold for all conformal algebras.
For example, consider the free $H$-module $C$ generated by the space
$W=\Bbbk[x]\oplus \Bbbk w$ with the following operation on generators:
\[
  W_\lambda w = 0, \quad 
 f(x)_\lambda g(x)=f(x-T-\lambda)g(x ),  \quad 
 w_\lambda f(x) = f(T)w,
\]
for $f,g\in \Bbbk[x]$. This is an associative conformal algebra.
Assume there exists a unital conformal algebra $C_e$
with unit $e$
such that $C\subseteq C_e$. Then associativity implies
\[
 ((e_\lambda w)_0 x^n) = (e_\lambda (w_{-\lambda} x^n)) = e_\lambda T^nw =
   (T+\lambda )^n (e_\lambda w), \ n\ge 1. 
\]
This is impossible since $(e_\lambda w)$ is a polynomial in $\lambda $,
and its degree does not depend on the choice of $n$, except for
$(e_\lambda w)=0$. But $(e_{0 } w)=w$ by the definition of a conformal 
unit. The contradiction obtained shows that $C_e$ does not exist.

Another problem on embeddings of conformal algebras concerns the following 
observation.
If $C$ is an associative conformal algebra
with operations $T:C\to C$ and $(\cdot{}_\lambda \cdot):C\otimes C\to C[\lambda ]$
then the same module over $H=\Bbbk[T]$ with respect to the new operation
\[
[x_\lambda y ]  = (x_\lambda y) - (y_{\mu }x)|_{\mu=-T-\lambda}
\]
is a Lie conformal algebra \cite{Kac1998}. It is  natural
to denote this conformal algebra by~$C^{(-)}$.

In contrast to the case of ordinary algebras, there exist Lie conformal algebras
that can not be embedded into associative conformal algebras \cite{Roitman2000}.
However, it is unknown whether the following statement is true.

\begin{conjecture}\label{conj:Ado}
Suppose $L$ is a finite Lie conformal algebra which is a torsion-free
$H$-module.
Then $L$ can be embedded into an associative conformal algebra with unit.
\end{conjecture}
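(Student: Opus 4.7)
The plan is to reduce Conjecture~\ref{conj:Ado} to a conformal analogue of Ado's theorem. For a finite torsion-free $H$-module $V$, the conformal algebra $\mathrm{Cend}\,V$ of conformal endomorphisms is an associative conformal algebra with a natural conformal unit (the identity endomorphism), and is isomorphic to matrices over the Weyl conformal algebra. Exhibiting an injective homomorphism $\rho: L \to (\mathrm{Cend}\,V)^{(-)}$ of Lie conformal algebras would therefore prove the conjecture directly, without any further appeal to Theorem~\ref{thm:UnitEmbedding}.

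First I would invoke the structure theory of finite Lie conformal algebras due to D'Andrea--Kac and Bakalov--D'Andrea--Kac: under the torsion-free hypothesis, $L$ is an extension of a semisimple part $L/R$ by its solvable radical $R$, with $L/R$ a direct sum of current algebras $\mathrm{Cur}\,\mathfrak{g}_i$ over finite-dimensional simple Lie algebras and at most one copy of $\mathrm{Vir}$. Faithful finite-rank conformal modules exist for each factor: $\mathrm{Cur}\,\mathfrak{g}$ acts faithfully on $H \otimes W$ whenever $W$ is a faithful $\mathfrak{g}$-module, supplied by the classical Ado theorem; and $\mathrm{Vir}$ acts faithfully on the rank-one module $M_\Delta = H v$ with $\ell_\lambda v = (T+\Delta \lambda) v$ for any nonzero $\Delta$.

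Next I would extend these representations to the whole of $L$ along the lines of Jacobson's proof of the classical Ado theorem: treat the solvable radical $R$ separately, construct a faithful conformal representation of $R$ by coinduction from $H$-subalgebras compatible with the descending central series of the annihilation algebra $\mathcal{A}(R)$, and then glue this with the semisimple representation on a common finite torsion-free $H$-module so that both remain faithful. The coinduction step makes essential use of the nilpotency of $R$ together with the action of the semisimple part of $L$ on $R$ by conformal derivations.

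The main obstacle is this extension step. Even classically, Ado's theorem is among the most delicate results on Lie algebras, and the conformal version must additionally preserve the $H$-module structure, keep the representation of finite rank, and guarantee torsion-freeness. The presence of $\mathrm{Vir}$ is particularly subtle, since its faithful conformal modules of small rank shift $T$ by a nonzero scalar, so intertwining them with the radical action demands fine bookkeeping. A related concern is ruling out pathologies such as Roitman's examples of finite Lie conformal algebras that admit no associative envelope: the torsion-freeness hypothesis has to enter in a genuine way at the inductive step, presumably through the structural results for $\mathrm{Cend}$ on torsion-free modules. If the construction succeeds, the resulting faithful conformal $L$-module yields the embedding into $\mathrm{Cend}\,V$, and the conjecture follows.
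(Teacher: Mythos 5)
The statement you are trying to prove is stated in the paper as an open conjecture: the author explicitly writes that ``it is unknown whether the following statement is true,'' and the paper only establishes special cases (semisimple algebras, nilpotent algebras via Roitman's result, and semidirect products of a current algebra with a finite torsion-free solvable radical). Your proposal does not close this gap. The reduction you describe --- a faithful finite torsion-free conformal module $V$ gives an embedding $L\to(\mathrm{Cend}\,V)^{(-)}$, and $\mathrm{Cend}\,V$ is unital and isomorphic to matrices over the Weyl conformal algebra --- is correct and is indeed the standard reformulation of Conjecture~\ref{conj:Ado} as a conformal Ado theorem. But that reformulation is essentially equivalent to the conjecture itself; the entire difficulty lives in the step you defer, namely constructing a single finite torsion-free faithful module for all of $L$ by combining a representation of the solvable radical $R$ with one of the semisimple quotient. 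You name this as ``the main obstacle'' and offer only a heuristic (``coinduction from $H$-subalgebras compatible with the descending central series of $\mathcal A(R)$,'' then ``glue''), with no construction, no verification of finiteness or torsion-freeness of the resulting module, and no argument that faithfulness survives the gluing. A proof conditional on ``if the construction succeeds'' is not a proof.

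There are also smaller inaccuracies that would need repair even in a genuine attempt. The semisimple part $L/R$ is not in general a direct sum of current algebras and at most one copy of $\mathrm{Vir}$; by the D'Andrea--Kac classification it is a direct sum of summands each of which may be $\mathrm{Vir}$, $\mathrm{Cur}\,\mathfrak g$, or a semidirect product $\mathrm{Vir}\ltimes\mathrm{Cur}\,\mathfrak g$, and several Virasoro factors can occur. Your appeal to ``the nilpotency of $R$'' is unwarranted: $R$ is only solvable, and the passage from the nilradical to the full radical is precisely where the classical Ado argument is most delicate; its conformal analogue is not known. Finally, an extension of $L/R$ by $R$ need not split, whereas the one case the paper actually proves beyond semisimple and nilpotent (via the conformal Lie theorem of D'Andrea--Kac and the representation trick of Theorem~\ref{thm:ConfRepresentation}) assumes a semidirect product with current semisimple part and no Virasoro component. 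In short: your strategy is the natural one and matches the direction the paper points in, but the statement remains a conjecture, and your proposal supplies no argument for the step that makes it one.
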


It was shown in \cite{DK1998}
that every such $L$ has a maximal solvable ideal $R$, so $L/R$ is semisimple.

The conjecture obviously holds for semisimple conformal algebras.
In \cite{Roitman2006}, it was shown that a nilpotent Lie conformal algebra
can be embedded into a nilpotent associative conformal algebra with the same index
of nilpotency.
This proves Conjecture~\ref{conj:Ado} for nilpotent algebras, but it 
actually holds in much more general class of Lie conformal algebras that 
includes finite torsion-free solvable algebras.

The idea comes from the construction of a
conformal representation for a Leibniz algebra in 
Theorem~\ref{thm:ConfRepresentation}.

Let us first consider a Lie conformal algebra of the type
$L=\mathrm{Cur}\,\mathfrak g$, where $\mathfrak g$ is a Lie algebra, 
$\dim \mathfrak g<\infty $. 
This is straightforward to check that the embedding built in the proof of 
Theorem~\ref{thm:ConfRepresentation} 
is in fact a homomorphism of conformal algebras. 
This proves Conjecture~\ref{conj:Ado}
without a reference to the classical Ado Theorem. 

 In the more general case,  the Lie Theorem for conformal algebras
proved in \cite{DK1998} allows to deduce the following fact.

\begin{theorem}[{\cite{Kol2010}}]
Suppose $L$ is a Lie conformal algebra which is a semi-direct product 
of a current conformal algebra $\mathrm{Cur}\,\mathfrak g$
($\dim\mathfrak g<\infty$)
and a finite torsion-free solvable Lie conformal algebra $R$.
Then there exists a finite-dimensional associative algebra $A$
such that $L\subseteq (\mathrm{Cur}\,A)^{(-)}$.
\end{theorem}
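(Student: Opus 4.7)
The plan is to imitate the construction used in the proof of Theorem~\ref{thm:ConfRepresentation}, choosing the acting space so that both the current part $\mathrm{Cur}\,\mathfrak g$ and the solvable part $R$ act faithfully and compatibly with the semi-direct product. The paper has already observed that for $L=\mathrm{Cur}\,\mathfrak g$ the formulas of Theorem~\ref{thm:ConfRepresentation} give a conformal homomorphism, so the task reduces to putting the solvable piece into the same framework and gluing.

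First I would treat the two summands individually. For $\mathrm{Cur}\,\mathfrak g$ with $\dim\mathfrak g<\infty$, the classical Ado theorem embeds $\mathfrak g$ into $U^{(-)}$ for some finite-dimensional associative $U$, which induces a conformal embedding $\mathrm{Cur}\,\mathfrak g\hookrightarrow (\mathrm{Cur}\,U)^{(-)}$. For the finite torsion-free solvable $R$, I would invoke the Lie Theorem for conformal algebras from \cite{DK1998}: $R$ has a faithful finite conformal representation $V_R=H\otimes V_R^0$ on which the action is triangular, and triangular conformal endomorphisms lie inside a current subalgebra $(\mathrm{Cur}\,B)^{(-)}$ for a finite-dimensional algebra $B$ of upper-triangular matrices. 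Torsion-freeness of $R$ is what guarantees that $V_R$ can be taken $H$-free.

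Next I would glue the two pieces into a single faithful representation of $L$. Choose a finite-dimensional vector space $W=V_R^0\oplus V_U$ equipped with an action of $\bar L=\mathfrak g\oplus\bar R$ that refines both representations above (extending, if necessary, to a common $\bar L$-module by Ado applied to $\bar L$ itself, since $\bar L$ is again a finite-dimensional Lie algebra). Following the template of Theorem~\ref{thm:ConfRepresentation}, work on $V=W\oplus (L^0\otimes W)$ where $L^0$ is a $\Bbbk$-linear complement to $TL$ in $L$, and for every $x\in L$ define
\[
\rho(x)=1\otimes \rho_0(x)+T\otimes \rho_1(x)\in H\otimes \mathrm{End}\,V,
\]
with $\rho_0(x)$ acting via the $\bar L$-action of $\bar x$ on both summands (plus the Leibniz-type correction $a\otimes w\mapsto [x,a]\otimes w$ on the tensor summand), and $\rho_1(x):v\mapsto x\otimes v$, $a\otimes v\mapsto 0$. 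Injectivity is immediate from the $T$-coefficient $\rho_1$.

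Finally, I would verify that $\rho\colon L\to (\mathrm{Cur}\,A)^{(-)}$ with $A=\mathrm{End}\,V$ is a conformal homomorphism, by computing $[\rho(x)_\lambda \rho(y)]$ and matching the $1\otimes$ and $T\otimes$ components against $\rho([x_\lambda y])$; the calculation is the direct conformal analogue of the one performed for Leibniz algebras in Theorem~\ref{thm:ConfRepresentation}, with the additional $\lambda$-dependence absorbed by the $(T+\lambda)$-factors produced by the current structure. The principal obstacle is ensuring that the chosen $W$ realises the semi-direct product correctly: the conformal commutator of the triangular $R$-operators with the current $\mathfrak g$-operators must reproduce the $R$-action on $\mathrm{Cur}\,\mathfrak g$ inside $L$. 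This is precisely why $W$ must be taken as a module over the whole $\bar L$ and not as an independent pair of modules of $\mathfrak g$ and $\bar R$; the triangularity from Lie's theorem guarantees that everything stays within the ``$1\otimes\bullet+T\otimes\bullet$'' form required for a current embedding, and the Ado input on $\bar L$ makes $W$ finite-dimensional, so $A=\mathrm{End}\,V$ is finite-dimensional as required.
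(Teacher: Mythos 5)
First, a caveat: the paper does not actually prove this theorem --- it only records the guiding idea (transplant the construction of Theorem~\ref{thm:ConfRepresentation} and combine it with the conformal Lie theorem of \cite{DK1998}) and defers the argument to \cite{Kol2010}. Your outline follows that idea at the top level, but the two steps you declare to be routine are precisely where the difficulty lies, and as written both of them fail.

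The decisive gap is the claim that the verification is ``the direct conformal analogue'' of the computation in Theorem~\ref{thm:ConfRepresentation}. If $\rho(x)=1\otimes\rho_0(x)+T\otimes\rho_1(x)$ and $\rho(y)=1\otimes\rho_0(y)+T\otimes\rho_1(y)$ in $\mathrm{Cur}\,A$, then $[\rho(x)_\lambda\rho(y)]$ is a combination of the commutators $[\rho_i(x),\rho_j(y)]$ with coefficients $1$, $T+\lambda$, $-\lambda$, $-\lambda(T+\lambda)$, hence a polynomial of degree at most $2$ in $\lambda$. This is harmless for $\mathrm{Cur}\,\mathfrak g$, where $[x_\lambda y]$ is constant in $\lambda$ --- which is exactly why the paper can assert that the Leibniz-algebra construction is already a conformal homomorphism in that case --- but for a general finite torsion-free solvable $R$ the bracket of two generators can have arbitrarily high $\lambda$-degree: $R=Hx\oplus Hy$ with $[x_\lambda x]=[y_\lambda y]=0$ and $[x_\lambda y]=\lambda^3 y$ is solvable, finite, and torsion-free, yet $\rho([x_\lambda y])=\lambda^3\rho(y)\ne 0$ can never equal a polynomial of degree $\le 2$ in $\lambda$, and no change of the complement $L^0$ removes the cubic term. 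So $\rho(x)$ must be allowed to involve higher powers of $T$, and controlling those powers is the real content of the proof; it is there, not in producing a triangular module, that the conformal Lie theorem does its work. Two further problems: your correction term $a\otimes w\mapsto [x,a]\otimes w$ on $L^0\otimes W$ is not well defined, since $[x_\lambda a]$ depends on $\lambda$ and lies in $L=H\otimes L^0$ rather than in $L^0$; and you invoke the Lie theorem of \cite{DK1998} to obtain a \emph{faithful} finite triangular conformal module for $R$, whereas that theorem is a weight-vector statement about modules one already has and supplies no faithful one --- the existence of a faithful finite module of current type for $R$ is essentially the assertion of the present theorem restricted to $R$, so this step is circular as stated.
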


\section*{Acknowledgments}
This work was partially supported by 
RFBR 09-01-00157, SSc 3669.2010.1,
SB RAS Integration project N 97, 
Federal Target Grants  02.740.11.0429,  02.740.11.5191, 14.740.11.0346,
and by the ADTP Grant 2.1.1.10726. 

\bibliographystyle{plain}
\bibliography{kolesnikov}

\end{document}